\newtheorem{theo}{Theorem}[section]
\newtheorem{lemma}[theo]{Lemma}
\newtheorem{defi}[theo]{Definition}
\newtheorem{prop}[theo]{Proposition}
\newtheorem{conj}[theo]{Conjecture}
\newtheorem{remark}[theo]{Remark}
\numberwithin{equation}{section}
\def\Z{\mathbb{Z}}
\def\pre-tr{\operatorname{pre-tr}}
\def\Hom{\operatorname{Hom}}
\def\End{\operatorname{End}}
\newcommand{\cQ}{{\mathcal Q}}
\newcommand{\cP}{{\mathcal P}}
\newcommand{\cM}{{\mathcal M}}
\newcommand{\cD}{{\mathcal D}}
\newcommand{\cA}{{\mathcal A}}
\newcommand{\cB}{{\mathcal B}}
\newcommand{\cC}{{\mathcal C}}
\newcommand{\cE}{{\mathcal E}}
\newcommand{\cX}{{\mathcal X}}
\newcommand{\un}{\underline}
\newcommand{\id}{\operatorname{id}}
\newcommand{\Set}{\operatorname{Set}}
\title[A proof of Kontsevich-Soibelman conjecture]
{A proof of Kontsevich-Soibelman conjecture}
\author{Alexander I. Efimov}
\address{Department of Mechanics and Mathematics, Moscow State University, Moscow,
Russia}
\address{Independent University of Moscow, Moscow,
Russia} \email{efimov@mccme.ru}
\thanks{The author was partially supported by
the Moebius Contest Foundation for Young Scientists, and by the
NSh grant 1983.2009.1.}
\begin{document}

\begin{abstract} It is well known that "Fukaya category" is in fact an $A_{\infty}\text{-}$pre-category in sense of Kontsevich
and Soibelman \cite{KS}. The reason is that in general the
morphism spaces are defined only for transversal pairs of
Lagrangians, and higher products are defined only for transversal
sequences of Lagrangians. In \cite{KS} it is conjectured that for
any graded commutative ring $k,$ quasi-equivalence classes of
$A_{\infty}\text{-}$pre-categories over $k$ are in bijection with
quasi-equivalence classes of $A_{\infty}\text{-}$categories over
$k$ with strict (or weak) identity morphisms.

In this paper we prove this conjecture for essentially small
$A_{\infty}\text{-}$(pre-)categories, in the case when $k$ is a
field. In particular, it follows that we can replace Fukaya
$A_{\infty}\text{-}$pre-category with a quasi-equivalent actual
$A_{\infty}\text{-}$category.

We also present natural construction of pre-triangulated envelope
in the framework of $A_{\infty}\text{-}$pre-categories. We prove
its invariance under quasi-equivalences.
\end{abstract}

\maketitle

\tableofcontents

\section{Introduction}

A remarkable construction of K. Fukaya \cite{F} associates to a
symplectic manifold a ($\Z\text{-}$ or $\Z/2\text{-}$)graded
$A_{\infty}\text{-}$pre-category in sense of Kontsevich and
Soibelman \cite{KS}. Its objects are Lagrangian submanifolds with
some additional structures. This is not an actual
$A_{\infty}\text{-}$category since in general the morphism spaces
are defined only for transversal pairs of Lagrangians, and higher
products are defined only for transversal sequences of
Lagrangians. Fukaya's construction is used in the categorical
interpretation of mirror symmetry \cite{K} for Calabi-Yau
varieties, and further generalizations to Fano and general cases,
the so-called homological mirror symmetry conjecture. For the
systematic exposition of different versions of Fukaya
$A_{\infty}\text{-}$pre-categories, see \cite{Se}.

However, in order to prove HMS conjecture at least in some special cases,
one should first replace Fukaya $A_{\infty}\text{-}$pre-category
with a (quasi-equivalent) actual $A_{\infty}\text{-}$category.
Clearly, each $A_{\infty}\text{-}$category (with weak identity
morphisms) can be considered also as an
$A_{\infty}\text{-}$pre-category. Kontsevich and Soibelman
\cite{KS} formulated the following natural conjecture.

\begin{conj}(\cite{KS})Let $k$ be a graded commutative ring.
Then quasi-equivalence classes of
$A_{\infty}\text{-}$pre-categories over $k$ are in bijection with
quasi-equivalence classes of $A_{\infty}\text{-}$categories over
$k$ with strict (or weak) identity morphisms.\end{conj}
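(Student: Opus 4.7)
\medskip

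\noindent\textbf{Proof proposal.} The plan is to produce operations $(-)^{\flat}$ and $(-)^{\sharp}$ between $A_{\infty}$-pre-categories and $A_{\infty}$-categories (with weak units) that are mutually inverse on quasi-equivalence classes. The functor $(-)^{\flat}$ is formal: an $A_{\infty}$-category $\mathcal{A}$ becomes an $A_{\infty}$-pre-category $\mathcal{A}^{\flat}$ by declaring every finite sequence of objects transversal, and this visibly preserves quasi-equivalence. All the real work goes into $(-)^{\sharp}$: turning a pre-category $\mathcal{C}$ into an honest $A_{\infty}$-category $\mathcal{C}^{\sharp}$ quasi-equivalent to $\mathcal{C}$ as a pre-category.

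The key device I would use is a \emph{generic resolution}. For each object $X$ of $\mathcal{C}$, choose a coherent simplicial-type system $X_{\bullet}$ of ``copies'' of $X$ such that any finite collection of such systems becomes transversal in each simplicial degree; admissibility in a pre-category provides enough wiggle room for such choices, and an inductive / Zorn-type argument over the essentially small object set produces a coherent global system. I would then set
\[
\Hom_{\mathcal{C}^{\sharp}}(X,Y) := \holim_{[n]\in\Delta}\Hom_{\mathcal{C}}(X_{n},Y_{n}),
\]
and transport the pre-category higher products $m_{k}$ to genuine $A_{\infty}$-operations on these totalizations via the homotopy transfer / perturbation lemma; the field hypothesis on $k$ allows one to pass to minimal models where convenient. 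Weak units on $\mathcal{C}^{\sharp}$ are assembled from the pre-category's local identities $e_{X_{n}} \in \Hom_{\mathcal{C}}(X_{n},X_{n})$, which exist as homotopy units on each transversal pair.

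Three verifications then close the loop: (i) the tautological comparison $\mathcal{C} \to \mathcal{C}^{\sharp}$ is a quasi-equivalence of pre-categories, which reduces to showing that for a transversal pair $(X,Y)$ the cosimplicial object $[n]\mapsto\Hom_{\mathcal{C}}(X_{n},Y_{n})$ is homotopy constant; (ii) $\mathcal{C}^{\sharp}$ is independent of the choice of resolutions, via a contractibility / filteredness argument on the poset of coherent resolutions; (iii) for an actual $A_{\infty}$-category $\mathcal{A}$, the composite $(\mathcal{A}^{\flat})^{\sharp}$ is quasi-equivalent to $\mathcal{A}$, which follows because in the fully transversal setting the constant resolution is cofinal. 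The pre-triangulated envelope developed earlier in the paper serves as a useful auxiliary device: since shifts and cones have universal descriptions there, quasi-equivalence of two $A_{\infty}$-pre-categories can be tested at the level of envelopes, and its quasi-equivalence invariance (already established in the paper) makes each of (i)--(iii) easier to check.

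The principal obstacle I foresee is item (ii), i.e.\ the contractibility/filteredness of the system of coherent transversal resolutions: a pre-category need not admit a strict cosimplicial object of copies of a given $X$, only a highly structured partial version, so one must show that any two such partial resolutions can be amalgamated up to controlled homotopy and that the perturbation-lemma data transport consistently along such amalgamations. Overcoming this would be the technical heart of the proof; essential smallness ensures that the Zorn/small-object steps terminate, and the field hypothesis on $k$ makes the homotopy transfer of $A_{\infty}$-structures manageable via Kadeishvili-type formulas.
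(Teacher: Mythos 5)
Your plan is genuinely different from the paper's, but it has two concrete gaps, one of which you flag and one of which you do not. The one you flag --- existence and contractibility of a coherent system of ``transversal cosimplicial resolutions'' $X_{\bullet}$ of every object --- is worse than you suggest: the extension axiom of an $A_{\infty}$-pre-category is strictly \emph{finitary} (it produces $X_{-},X_{+}$ making finitely many prescribed sequences transversal) and transversality is not symmetric, so for a pre-category with infinitely many objects there is in general no single choice of $X_{n}$ transversal to the $n$-th level of \emph{every} other resolution; you would have to index resolutions by finite subsets of objects and then prove a cofinality statement, and nothing in your sketch indicates how the perturbation data would be transported coherently along the resulting (non-filtered) diagram. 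The gap you do not flag is the unit construction: in an $A_{\infty}$-pre-category the pair $(X_{n},X_{n})$ is not transversal, so $\Hom_{\mathcal{C}}(X_{n},X_{n})$ is simply not defined and there are no ``local identities $e_{X_{n}}$'' to assemble. Producing units out of a structure that has none is the essential content of the conjecture, and your proposal assumes them at the outset. A smaller issue: you propose to test quasi-equivalence at the level of pre-triangulated envelopes, but the paper only proves the forward implication (quasi-equivalent pre-categories have quasi-equivalent envelopes), not the converse you would need.

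For comparison, the paper sidesteps both problems by never resolving objects. It reduces to small \emph{minimal} pre-categories (this is where the field hypothesis enters), views such a $\mathcal{C}$ as a graded pre-category $\mathcal{C}^{gr}$ with an $A_{\infty}$-structure on top, and formally inverts quasi-isomorphisms to get an honest graded category $\mathcal{C}^{gr}_{full}$ --- which has genuine identity morphisms for free. The $A_{\infty}$-structure is then transported across the equivalence $\mathcal{C}^{gr}\to\mathcal{C}^{gr}_{full}$ by obstruction theory: obstructions live in a Hochschild cohomology of graded pre-categories, and the technical heart is that this cohomology is invariant under equivalences (Lemma \ref{iso_on_Hoch}), proved with simplicial local systems, where the extension axiom is used exactly once and only in its finitary form, to contract a resolution. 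If you want to salvage your approach, the amalgamation problem in your item (ii) and the construction of units from quasi-isomorphisms between distinct transversal copies are the two points you must actually solve.
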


The main result of this paper is the following theorem.

\begin{theo}\label{dublicate}Let $k$ be a field. Then quasi-equivalence classes of
essentially small $A_{\infty}\text{-}$pre-categories over $k$ are
in bijection with quasi-equivalence classes of essentially small
$A_{\infty}\text{-}$categories over $k$ with strict (or weak)
identity morphisms.
\end{theo}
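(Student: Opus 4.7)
The plan is to establish the bijection by exhibiting an essentially surjective and injective inverse to the forgetful map from essentially small $A_\infty$-categories with weak identities to essentially small $A_\infty$-pre-categories. The forgetful map itself is tautological: any honest $A_\infty$-category with weak identities becomes a pre-category in which every finite sequence of objects is declared transversal. What must be shown is therefore (a) that every pre-category $\cA$ admits a quasi-equivalent honest $A_\infty$-categorical model $\cA'$ with strict identities, and (b) that two $A_\infty$-categories $\cB_1,\cB_2$ with identities which are quasi-equivalent as pre-categories are already quasi-equivalent as $A_\infty$-categories with identities.

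For part (a), the natural idea is to \emph{thicken} $\cA$ by enlarging each object into a whole directed system of transversal copies. The transversality axiom of a pre-category guarantees that for any finite collection of objects one can find quasi-isomorphic copies making the entire collection transversal; combined with essential smallness and a Zorn-type argument, this yields, for each $X \in \cA$, a cofinal directed family $\{X^{(\alpha)}\}$ of quasi-isomorphic copies such that any finite multi-set drawn from different families is transversal. One then defines $\cA'$ to have the same objects as $\cA$ and sets
\[
\Hom_{\cA'}(X,Y) \;=\; \colim_{(\alpha,\beta)}\, \Hom_{\cA}(X^{(\alpha)},Y^{(\beta)}),
\]
the colimit running over transversal pairs. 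The higher products $m_n$, defined in $\cA$ only on transversal sequences, extend to $\cA'$ by passage to the colimit. Since $k$ is a field, a homotopy-transfer step then produces a quasi-isomorphic strictly unital minimal model which serves as the desired $A_\infty$-category, and $X \mapsto X^{(0)}$ gives the quasi-equivalence $\cA \to \cA'$ of pre-categories (bijective on objects, with each transition map a quasi-isomorphism).

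The principal obstacle is the \emph{coherence} of this construction: one must show that the $m_n$ on the colimit are genuinely well-defined (independent of the many choices of transversal representatives) and satisfy the $A_\infty$-relations on the nose once strict identities are inserted. The strategy I would follow is to view the poset of transversal representatives as a combinatorial object whose nerve is contractible, so that the relevant homotopy colimits are invariant up to quasi-isomorphism; this makes it possible to lift the $A_\infty$-structure from $\cA$ to $\cA'$ while controlling the ambiguities in choosing representatives. The pre-triangulated envelope construction of the paper, together with its invariance under quasi-equivalences established there, is the natural technical vehicle for this: the envelope replaces transversality choices by concrete twisted-complex bookkeeping, after which homotopy transfer over a field gives both the $A_\infty$-relations and the strict unit.

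For part (b), suppose $\cB_1$ and $\cB_2$ are honest $A_\infty$-categories with identities connected by a zig-zag of quasi-equivalences $\cB_1 \leftarrow \cC_1 \to \cC_2 \leftarrow \cdots \to \cB_2$ in the world of pre-categories. Applying the thickening functor $\cC \mapsto \cC'$ to every term produces a parallel zig-zag of honest $A_\infty$-categories. Because $\cB_i$ already possesses strict identities, the thickening $\cB_i'$ is quasi-equivalent to $\cB_i$ as an $A_\infty$-category with identities (the identities make every sequence effectively transversal in $\cB_i$, so the colimit defining $\Hom_{\cB_i'}$ collapses up to quasi-isomorphism). Combining with part (a) applied to each $\cC_i$, the original zig-zag descends to one of quasi-equivalences of honest $A_\infty$-categories with identities connecting $\cB_1$ to $\cB_2$, completing the bijection.
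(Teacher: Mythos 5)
There is a genuine gap, and it sits exactly at the point you flag as the ``principal obstacle'' without resolving it. Your thickening $\Hom_{\cA'}(X,Y)=\colim_{(\alpha,\beta)}\Hom_{\cA}(X^{(\alpha)},Y^{(\beta)})$ is not a colimit of a functor to chain complexes in any strict sense: the transition maps between transversal pairs are given by composing with quasi-isomorphisms $X^{(\alpha')}\to X^{(\alpha)}$ and $Y^{(\beta)}\to Y^{(\beta')}$ via $m_2$, and these are only chain maps whose composites agree up to homotopies governed by $m_3$, not on the nose. Moreover the indexing category of transversal pairs need not be filtered, and a given $n$-tuple of morphisms represented at various pairs need not admit a common transversal refinement of the whole $(n+1)$-term sequence compatible with all previously made choices. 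So ``extend $m_n$ by passage to the colimit'' is not an operation one can perform; it is precisely the coherence problem the theorem is about. Your two proposed remedies do not close this: contractibility of the nerve of representatives would give you well-definedness up to quasi-isomorphism (i.e.\ an $\infty$-categorical statement), not a strict $A_\infty$-category; and the pre-triangulated envelope of Section 4 is logically downstream of the main theorem in this paper --- it is itself an $A_\infty$-pre-category, with the same transversality bookkeeping, and does not convert a pre-category into an honest category.

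The paper's route is structurally different and worth contrasting. It first passes to minimal models (here the field hypothesis enters), so that at the level of the underlying graded pre-category $\cC^{gr}$ the quasi-isomorphisms become actual isomorphisms after formally inverting them, yielding a strictly well-defined graded category $\cC^{gr}_{full}$ with no colimits and no homotopy ambiguity. The higher operations $m_n$, $n\geq 3$, are then transported from $\cC^{gr}$ to $\cC^{gr}_{full}$ order by order via obstruction theory: the obstructions live in a Hochschild cohomology of graded pre-categories, and the crux is Lemma \ref{iso_on_Hoch}, proved with simplicial local systems, asserting that an equivalence of graded pre-categories induces an isomorphism on this Hochschild cohomology. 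This is the non-trivial input that replaces your coherence argument; without it (or something equivalent) part (a) of your plan does not go through, and part (b), which relies on functoriality of the thickening, inherits the same gap.
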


We deal with $A_{\infty}\text{-}$(pre-)categories over a field
because we need to pass to minimal
$A_{\infty}\text{-}$(pre-)categories (i.e. with $m_1=0$). Further,
we deal with essentially small
$A_{\infty}\text{-}$(pre-)categories for purely set-theoretical
reason: we need to consider Hochshild cohomology of graded
(pre-)categories.

The second subject of the paper is the natural construction of
twisted complexes in the framework of
$A_{\infty}\text{-}$pre-categories. Here the main statement is the
invariance of twisted complexes under quasi-equivalences
(Proposition \ref{invariance}). For ordinary
$A_{\infty}\text{-}$categories we obtain standard pre-triangulated
envelopes.

The paper is organized as follows.

In Section \ref{Preliminaries} we define
$A_{\infty}\text{-}$categories, strict and weak identity
morphisms, quasi-equivalences, and
$A_{\infty}\text{-}$pre-categories, following \cite{KS}.

In Section \ref{main_theorem} we prove Main Theorem
\ref{dublicate} (Theorem \ref{main_theo}). The proof goes as
follows. In Subsection \ref{small} we pass from essentially small
$A_{\infty}\text{-}$(pre-)categories to small ones. Further, in
Subsection \ref{minimal} we pass from small to small minimal
$A_{\infty}\text{-}$(pre-)categories. In Subsection \ref{Hoch} we
define Hochshild cohomology of graded pre-categories. Roughly
speaking, obstructions to constructing, step by step, of
$A_{\infty}\text{-}$structures and $A_{\infty}\text{-}$morphisms
live in these cohomology spaces.

In Subsection \ref{main_lemma} we formulate and prove Main Lemma
(Lemma \ref{main_lemma}) about invariance of Hochshild cohomology
under equivalences of graded pre-categories. In contrast to
ordinary DG and $A_{\infty}\text{-}$categories, this is
non-trivial, and this is in fact the crucial point in the proof of
Main Theorem. Here we use the language of simplicial local
systems.

In Subsection \ref{A-structures} we introduce the sets of
equivalence classes of minimal $A_{\infty}\text{-}$structures on
graded pre-categories, and develop basic obstruction theory for
lifting $A_{\infty}\text{-}$structures and
$A_{\infty}\text{-}$homotopies. In Subsection \ref{inv_Theo} we
apply Main Lemma to prove the invariance of the set of equivalence
classes of minimal $A_{\infty}\text{-}$structures on graded
pre-categories. Finally, in Subsection \ref{proof} we prove Main
Theorem using the invariance result.

In Section \ref{twisted} we present the construction of
pre-triangulated envelope for $A_{\infty}\text{-}$pre-categories
over arbitrary graded commutative ring. We verify that it is
well-defined and is invariant under quasi-equivalences.

{\noindent{\bf Acknowledgments.}} I am grateful to D. Kaledin for
his remarks.

\section{Preliminaries on $A_{\infty}$-(pre-)categories}
\label{Preliminaries}

Fix some basic field $k$ of arbitrary characteristic.

\subsection{Non-unital $A_{\infty}$-algebras and $A_{\infty}$-categories}

Let $A=\bigoplus\limits_{i\in\Z}A^i$ be a $\Z$(resp.
$\Z/2$)-graded vector space. Denote by $A[n]$ its shift by $n:$
$A[n]^i:=A^{i+n}.$

\begin{defi}A structure of a non-unital $A_{\infty}\text{-}$algebra on $A$ is given by degree $+1$ coderivation
$b:T_+(A[1])\to T_+(A[1]),$ such that $b^2=0.$ Here
$T_+(A[1])=\bigoplus\limits_{n\geq 1}A[1]^{\otimes n}$ is a cofree
tensor coalgebra.\end{defi}

The coderivation $b$ is uniquely determined by its "Taylor
coefficients" $m_n:A^{\otimes n}\to A[2-n],$ $n\geq 1.$ The
condition $b^2=0$ is equivalent to the series of quadratic
relations

\begin{equation}\label{eq_for_m_n}\sum\limits_{i+j=n+2}\sum\limits_{0\leq l\leq
i-1}(-1)^{\epsilon}m_i(a_0,\dots,a_{l-1},m_j(a_l,\dots,a_{l+j-1}),a_{l+j},\dots,a_n)=0,
\end{equation}
where $a_m\in A,$ and $\epsilon=j\sum\limits_{0\leq s\leq
l-1}\deg(a_s)+l(j-1)+j(i-1).$ In particular, for $n=0,$ we have
$m_1^2=0.$

\begin{defi} An $A_{\infty}\text{-}$ morphism of non-unital $A_{\infty}\text{-}$algebras $A\to B$ is
a morphism of the corresponding non-counital DG coalgebras
$T_+(A[1])\to T_+(B[1]).$\end{defi}

Such an $A_{\infty}\text{-}$morphism is uniquely determined by its
"Taylor coefficients" $f_n:A^{\otimes n}\to B[1-n],$ satisfying
the following system of equations:

\begin{multline}\sum\limits_{1\leq l_1<l_2<\dots<l_i=n}(-1)^{\gamma_i}m_i^B(f_{l_1}(a_1,\dots,a_{l_1}),
\dots,f_{n-l_{i-1}}(a_{l_{i-1}+1},\dots,a_n))=\\
\sum\limits_{s+r=n+1}\sum\limits_{1\leq j\leq
s}(-1)^{\epsilon_j}f_s(a_1,\dots,a_{j-1},m_r^A(a_j,\dots,a_{j+r-1}),a_{j+r},\dots,a_n),
\end{multline}
where $a_m\in A,$ $\epsilon_j=r\sum\limits_{1\leq p\leq
j-1}\deg(a_p)+j-1+r(s-j),$ $\gamma_i=\sum\limits_{1\leq p\leq
i-1}(i-p)(l_p-l_{p-1}-1)+\sum\limits_{1\leq p\leq
i-1}\nu(l_p)\sum\limits_{l_{p-1}+1\leq q\leq l_p}\deg(a_q),$ where
we use the notation $\nu(l_p)=\sum\limits_{p+1\leq m\leq
i}(l_m-l_{m-1}-1),$ and put $l_0=0.$

\begin{defi}A non-unital $\Z$(resp. $\Z/2$)-graded $A_{\infty}\text{-}$category $\cC$ is given by
the following data:

1) A class of objects $Ob(\cC).$

2) For any two objects $X_1$ and $X_2,$ a $\Z$(resp.
$\Z/2$)-graded vector space of morphisms $\Hom(X_1,X_2).$

3) For any sequence of objects $X_0,\dots,X_n,$ a map of graded
vector spaces $m_n:\bigotimes\limits_{0\leq i\leq
n-1}\Hom(X_i,X_{i+1})\to \Hom(X_0,X_n)[2-n].$

This data us required to satisfy the following property: for each
finite collection of objects $X_0,\dots,X_N,$ $N\geq 0,$ the
graded vector space $\bigoplus\limits_{0\leq i,j\leq
N}\Hom(X_i,X_j)$ equipped with operations $m_n,$ becomes an
$A_{\infty}\text{-}$algebra.
\end{defi}

\begin{remark}A non-unital $A_{\infty}\text{-}$algebra can be considered as a non-unital $A_{\infty}\text{-}$category
with one object $X$ such that $\Hom(X,X)=A.$\end{remark}

\begin{remark}If $\cC$ is a non-unital $A_{\infty}\text{-}$category, then we have a "non-unital" graded category $H(\cC),$
which is defined by replacing the spaces of morphisms by their
cohomologies with respect to $m_1.$ Here "non-unital" means that
we may not have identity morphisms
$id_X\in\Hom^0_{H(\cC)}(X,X).$\end{remark}

\begin{defi}An $A_{\infty}\text{-}$functor between non-unital $A_{\infty}\text{-}$categories $F:\cC_1\to \cC_2$ is given by
the following data:

1) A map of classes of objects $F:Ob(\cC_1)\to Ob(\cC_2).$

2) For any finite sequence of objects $X_0,\dots,X_n$ in $\cC_1,$
a morphism of graded vector spaces $f_n:\bigotimes\limits_{0\leq
i\leq n-1}\Hom(X_i,X_{i+1})\to \Hom(F(X_0),F(X_n))[1-n].$

It is required that for any finite collection of objects
$X_0,\dots,X_N,$ $N\geq 0,$ the sequence $f_n,$ $n\geq 1,$ defines
an $A_{\infty}\text{-}$morphism

\begin{equation}\bigoplus\limits_{0\leq i,j\leq
N}\Hom(X_i,X_j)\to \bigoplus\limits_{0\leq i,j\leq
N}\Hom(F(X_i),F(X_j)).\end{equation}
\end{defi}

\subsection{Identity morphisms}

Now we define strict and weak identity morphisms.

\begin{defi}Let $\cC$ be a non-unital $A_{\infty}\text{-}$category, and $X\in Ob(\cC).$ A morphism $e\in \Hom^0(X,X)$
is called

a) a strict identity if $m_2(f,e)=f,$ $m_2(e,g)=g,$ for $n\ne 2$
$m_n(f_1,\dots,f_{i-1},e,f_{i+1},\dots,f_n)=0$ for any morphisms
$f, g, f_j$ such that the equalities make sense. In this case we
put $1_X:=e.$

b) a weak identity if $m_1(e)=0,$ and for any closed morphisms
$f:X\to Y,$ $g:Z\to X,$ we have that in $H(\cC)$
$\bar{e}\cdot\bar{g}=\bar{g},$ $\bar{f}\cdot \bar{e}=\bar{f}.$
\end{defi}

Clearly, a strict identity is also a weak identity.

\begin{remark}If a non-unital $A_{\infty}\text{-}$category $\cC$ has at least weak identity morphisms, then $H(\cC)$ is
an actual graded category, and vice versa.\end{remark}

\begin{defi}An $A_{\infty}\text{-}$functor $F:\cC\to\cD$ between $A_{\infty}\text{-}$categories with strict (resp. weak)
identity morphisms is called strictly (resp. weakly) unital if it
preserves strict units and $f_n(g_1,\dots,1_X,\dots,g_n)=0$
whenever $n>1$ (resp. if it preserves weak identity
morphisms).\end{defi}

\begin{defi}A strictly (resp. weakly) unital $A_{\infty}\text{-}$functor $F:\cC\to \cD$ between strictly (resp. weakly)
unital $A_{\infty}\text{-}$categories is called a
quasi-equivalence if the induced functor $H(F):H(\cC)\to H(\cD)$
is an equivalence of graded categories.

Two $A_{\infty}$-categories with weak identity morphisms $\cC$ and
$\cD$ are called quasi-equivalent if there exists a finite
sequence of $A_{\infty}\text{-}$categories with weak identity
morphisms $\cC_0=\cC,\cC_1,\dots,\cC_n=\cD$ such that for $0\leq
i\leq n-1$ there exists a quasi-equivalence $\cC_i\to\cC_{i+1}$ or
vice versa.
\end{defi}

The following statement is well-known, see \cite{L-H}.

\begin{prop}If we consider only $A_{\infty}\text{-}$categories with strict identity morphisms and strictly unital quasi-equivalences,
then the resulting quasi-equivalence classes are in bijection with
quasi-equivalence classes of $A_{\infty}\text{-}$categories with
weak identity morphisms.
\end{prop}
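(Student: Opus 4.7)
The plan is to construct mutually inverse maps between the two sets of quasi-equivalence classes. The forgetful direction, from strictly unital to weakly unital, is immediate: a strict identity morphism is automatically a weak identity, and a strictly unital $A_\infty$-functor preserves weak units in cohomology. So the substance of the statement is to produce a strictly unital model for every weakly unital $A_\infty$-category, in a way that descends to quasi-equivalence classes.

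Given $(\cC, \{m_n\})$ weakly unital with weak units $e_X \in \Hom^0(X,X)$, I would construct a strictly unital replacement $\cC^{\mathrm{str}}$ with the same objects together with a weakly unital quasi-equivalence between $\cC$ and $\cC^{\mathrm{str}}$. A convenient concrete model: for each object $X$ enlarge $\Hom(X,X)$ by a new degree-zero generator $1_X$ together with acyclic correction terms, and inductively define higher products $m_n^{\mathrm{str}}$ so that the strict unit relations $m_2^{\mathrm{str}}(f,1_X)=f$, $m_2^{\mathrm{str}}(1_X,g)=g$, and $m_n^{\mathrm{str}}(\ldots,1_X,\ldots)=0$ for $n \neq 2$ hold on the nose. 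At each inductive step, the obstruction to extending $m_n^{\mathrm{str}}$ is a Hochschild cocycle measuring the failure of strict unitality, and the assumption that $e_X$ is already a weak unit (combined with $k$ being a field, so that splittings always exist) trivializes this class. The main obstacle is organizing the induction cleanly and verifying that weak unitality forces the relevant obstruction classes to vanish.

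For the inverse assertion and well-definedness, I would run parallel obstruction arguments at the level of $A_\infty$-functors. Given a weakly unital quasi-equivalence $F\colon\cC\to\cD$, one lifts it inductively, by modifying its Taylor coefficients $f_n$, to a strictly unital quasi-equivalence $F^{\mathrm{str}}\colon\cC^{\mathrm{str}}\to\cD^{\mathrm{str}}$; the obstructions live in a Hochschild-type complex and are killed by the weak unitality of $F$. Uniqueness of the strict model up to strictly unital quasi-equivalence, and independence from all choices in its construction, follow from the same machinery applied to identity and composite functors. Combined, these steps yield both surjectivity (every weakly unital class comes from a strictly unital one) and injectivity (different strict representatives of the same weak class are strictly quasi-equivalent), establishing the bijection. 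This argument is a standard precursor to the more elaborate Hochschild-cohomological machinery for pre-categories developed in Section \ref{main_theorem}.
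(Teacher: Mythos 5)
The paper does not actually prove this proposition: it is stated as well known and attributed to Lef\`evre-Hasegawa \cite{L-H}, so there is no internal argument to compare yours against. Your outline follows the standard strictification strategy found in that reference (and in \cite{Se}): the forgetful map in one direction, and in the other direction an inductive replacement of a weakly unital category by a strictly unital model, with functors lifted by a parallel induction on Taylor coefficients. That is the right shape of argument.

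Two points in your sketch carry essentially all of the content and are currently only asserted. First, the model you propose --- adjoining a new degree-zero generator $1_X$ to $\Hom(X,X)$ --- changes the cohomology of the endomorphism space unless the adjoined piece is acyclic; to repair this one must also adjoin a degree $-1$ element $h_X$ with $m_1(h_X)=1_X-e_X$ and then extend all higher products consistently, or else not enlarge at all and instead apply a formal diffeomorphism (a gauge transformation with $f_1=\id$) carrying the given weak unit to a strict one. You have conflated these two distinct constructions, and either one requires its own verification. Second, the claim that weak unitality trivializes the obstruction cocycles is the crux of the whole argument; the standard mechanism is that for a cohomologically unital $A_\infty$-category the normalized (unit-reduced) Hochschild complex is quasi-isomorphic to the full one, so every obstruction class admits a normalized representative, which is precisely the statement that the correction terms can be chosen to vanish on the unit. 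Without invoking this (or an equivalent device) the induction does not close. As a blind reconstruction of a result the paper only cites, your plan is consistent with the literature, but as written it is a plan rather than a proof.
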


\subsection{$A_{\infty}$-pre-categories}

Now we recall the definition of $A_{\infty}\text{-}$pre-categories
which were originally defined in \cite{KS}. We start with the
notion of a non-unital $A_{\infty}\text{-}$pre-category.

\begin{defi}A non-unital $\Z$(resp. $\Z/2$)-graded $A_{\infty}\text{-}$pre-category $\cC$ is the following data:

a) A class of objects $Ob(\cC),$

b) For any $n\geq 1$ a subclass $\cC_{tr}^n\subset Ob(\cC)^n$ of
transversal sequences. It is required that $\cC_{tr}^1=Ob(\cC).$

c) For each pair $(X_1,X_2)\in \cC_{tr}^2,$ a graded vector space
$\Hom(X_1,X_2).$

d) For a transversal sequence of objects $X_0,\dots,X_n,$ a map of
graded vector spaces $m_n:\bigotimes\limits_{0\leq i\leq
n-1}\Hom(X_i,X_{i+1})\to \Hom(X_0,X_n)[2-n].$

It is required that each subsequence
$(X_{i_1},X_{i_2},\dots,X_{i_l}),$ $0\leq i_1<i_2<\dots<i_l\leq n$
of a transversal sequence $(X_0,\dots,X_N)$ is transversal, and
that the graded vector space $\bigoplus\limits_{0\leq i<j\leq
N}\Hom(X_i,X_j)$ with operations $m_n$ becomes a non-unital
$A_{\infty}\text{-}$algebra.\end{defi}

Note that the property of transversality for a pair of objects is
not required to be symmetric. Clearly, a non-unital
$A_{\infty}\text{-}$category is the same as a non-unital
$A_{\infty}\text{-}$pre-category $\cC$ with
$\cC_{tr}^n=Ob(\cC)^n.$

\begin{defi}An $A_{\infty}\text{-}$functor between non-unital $A_{\infty}\text{-}$pre-categories $F:\cC\to \cD$ is
given by
the following data:

1) A map of classes of objects $F:Ob(\cC)\to Ob(\cD),$ such that
$F(\cC_{tr}^n)\subset\cD_{tr}^n.$

2) For any finite transversal sequence of objects $X_0,\dots,X_n$
in $\cC,$ a morphism of graded vector spaces
$f_n:\bigotimes\limits_{0\leq i\leq n-1}\Hom(X_i,X_{i+1})\to
\Hom(F(X_0),F(X_n))[1-n].$

It is required that for any transversal sequence of objects
$X_0,\dots,X_N,$ $N\geq 0,$ the sequence $f_n,$ $n\geq 1,$ defines
an $A_{\infty}\text{-}$morphism

\begin{equation}\bigoplus\limits_{0\leq i<j\leq
N}\Hom(X_i,X_j)\to \bigoplus\limits_{0\leq i<j\leq
N}\Hom(F(X_i),F(X_j)).\end{equation}
\end{defi}

Roughly speaking, an $A_{\infty}\text{-}$pre-category is a
non-unital $A_{\infty}\text{-}$pre-category with sufficiently many
transversal sequences. To define this notion rigorously, we first
define the notion of a quasi-isomorphism.

\begin{defi}Let $\cC$ be a non-unital $A_{\infty}\text{-}$pre-category, and $(X_1,X_2)\in\cC_{tr}^2.$ A closed morphism
$f\in\Hom^0(X_1,X_2)$ is called a quasi-isomorphism if for any
objects $X_0$ and $X_3$ such that $(X_0,X_1,X_2)\in\cC_{tr}^3,$
$(X_1,X_2,X_3)\in\cC_{tr}^3,$ one has that the maps
\begin{equation}m_2(f,\cdot):\Hom(X_0,X_1)\to\Hom(X_0,X_2)\quad\text{and}\quad
m_2(\cdot,f):\Hom(X_2,X_3)\to\Hom(X_1,X_3)
\end{equation}
are quasi-isomorphisms.\end{defi}

\begin{defi} An $A_{\infty}\text{-}$pre-category is a non-unital
$A_{\infty}\text{-}$pre-category $\cC$ which satisfies the
following extension property:

For any finite collection of transversal sequences $(S_i)_{i\in
I}$ in $\cC$ and an object $X$ there exist objects $X_-,X_+$ and
quasi-isomorphisms $f_-:X_-\to X,$ $f_+:X\to X_+$ such that the
sequences $(X_-,S_i,X_+),$ $i\in I,$ are transversal.
\end{defi}

\begin{defi}Let $\cC$ and $\cD$ be $A_{\infty}\text{-}$pre-categories. An $A_{\infty}\text{-}$functor $F:\cC\to\cD$ is an
$A_{\infty}\text{-}$functor between the corresponding non-unital
$A_{\infty}\text{-}$pre-categories which takes quasi-isomorphisms
in $\cC$ to quasi-isomorphisms in $\cD.$\end{defi}

\begin{remark} An $A_{\infty}$-category with weak identity morphisms is the same as an $A_{\infty}\text{-}$pre-category
$\cC$ with $\cC_{tr}^n=Ob(\cC)^n.$\end{remark}

Now we define the notion of quasi-equivalence for
$A_{\infty}\text{-}$pre-categories.

\begin{defi} An $A_{\infty}\text{-}$functor $F:\cC\to \cD$ between
$A_{\infty}\text{-}$pre-categories is called quasi-equivalence if:

a) For any pair $(X_1,X_2)\in \cC_{tr}^2,$ the map
$f_1:\Hom(X_1,X_2)\to \Hom(F(X_1),F(X_2))$ is a quasi-isomorphism.

b) Each object $Y$ of $\cD$ is quasi-isomorphic to some $F(X),$
$X\in Ob(\cC).$
\end{defi}

\begin{defi}Two $A_{\infty}\text{-}$pre-categories $\cC$ and $\cD$ are called quasi-equivalent if there exists
a sequence $\cC_0,\cC_1,\dots,\cC_n$ of
$A_{\infty}\text{-}$pre-categories with $\cC_0=\cC,$ $\cC_n=\cD,$
such that for each $o\leq i\leq n-1$ there exists a
quasi-equivalence from $\cC_i$ to $\cC_{i+1}$ or vice versa.
\end{defi}

\section{Main Theorem}
\label{main_theorem}

\begin{defi}An $A_{\infty}\text{-}$(pre-)category $\cC$ is called essentially small (resp. small) if the
quasi-isomorphism classes in $\cC$ form a set (resp. $Ob(\cC)$ is
a set).
\end{defi}

In the rest of the paper $"A_{\infty}\text{-}$category" stands for
$"A_{\infty}\text{-}$category with weak identity morphisms".

\begin{theo}\label{main_theo}Quasi-equivalence classes of essentially small $A_{\infty}\text{-}$pre-categories are in bijection
with quasi-equivalence classes of essentially small
$A_{\infty}\text{-}$categories.\end{theo}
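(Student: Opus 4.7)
The plan is to establish the bijection via obstruction theory, reducing the problem in two stages to a question about lifting $A_\infty$-structures on an underlying graded (pre-)category. One direction of the bijection is immediate: any $A_\infty$-category with weak identity morphisms is an $A_\infty$-pre-category with $\cC_{tr}^n=Ob(\cC)^n$, so the forgetful assignment is well defined on quasi-equivalence classes. The substantive content is producing, from each essentially small $A_\infty$-pre-category $\cC$, a quasi-equivalent $A_\infty$-category, in such a way that quasi-equivalent pre-categories yield quasi-equivalent categories, and vice versa.

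First I would carry out the two reductions announced in the introduction: from essentially small to small (by selecting set representatives for each quasi-isomorphism class, using the essential smallness hypothesis), and from small to small minimal (applying Kadeishvili's homological-perturbation construction transversal-sequence by transversal-sequence, which requires $k$ to be a field). This reduces the theorem to the assertion that quasi-equivalence classes of small minimal $A_\infty$-pre-categories are in bijection with quasi-equivalence classes of small minimal $A_\infty$-categories.

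Next I would fix a small graded pre-category $\Gamma$ and define a Hochschild complex $CC^\bullet(\Gamma)$ using cochains defined only on transversal sequences. Via the standard inductive argument, the set of equivalence classes of minimal $A_\infty$-pre-category structures extending $\Gamma$ is controlled by $HH^{\bullet,\bullet}(\Gamma)$: at each step $n$, obstructions to extending $m_{\leq n}$ to $m_{\leq n+1}$ lie in one Hochschild group while the ambiguity lies in the next, and the same formalism controls liftings of $A_\infty$-functors and $A_\infty$-homotopies. To pass from $\Gamma$ to a graded category, I would construct a canonical enlargement $\tilde\Gamma$, a graded category (with all sequences declared transversal) obtained by formally adjoining the missing composites using the quasi-isomorphisms supplied by the extension property for $A_\infty$-pre-categories, and observe that the inclusion $\Gamma\hookrightarrow\tilde\Gamma$ is an equivalence of graded pre-categories in the sense of the paper.

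The crucial step, and the main obstacle, is the Main Lemma asserting that an equivalence of graded pre-categories induces an isomorphism on Hochschild cohomology. For ordinary graded or $A_\infty$-categories this is classical, but for pre-categories the naive cochain map is not even defined, because composable cochains need not share a common transversal sequence through which to compose. My plan, following the paper's approach, is to organize the classes of transversal sequences into a simplicial local system whose fibers are the local Hochschild complexes and whose face/degeneracy maps forget or insert objects along quasi-isomorphisms; the global Hochschild complex of $\Gamma$ is recovered as the total complex, and an equivalence of graded pre-categories induces a fiberwise quasi-isomorphism of local systems, whence a quasi-isomorphism on totals. Granting the Main Lemma, the obstruction theory produces a bijection between equivalence classes of minimal $A_\infty$-structures on $\Gamma$ and on $\tilde\Gamma$; running the same argument for morphisms shows that the bijection descends to quasi-equivalence classes, which together with the two reductions proves the theorem.
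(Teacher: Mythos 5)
Your proposal follows essentially the same route as the paper: the same two reductions (essentially small to small, then to minimal), the same Hochschild cohomology of graded pre-categories with the canonical enlargement $\cC_{full}$, the same Main Lemma on invariance of Hochschild cohomology under equivalences proved via simplicial local systems, and the same obstruction-theoretic transfer of $A_{\infty}$-structures and strong homotopies. The one imprecision is in your description of the Main Lemma's mechanism: the point is not a fiberwise quasi-isomorphism of local systems but that the complex of projectives indexed by the \emph{transversal} sequences is still a resolution of the constant local system $\un{k}$, which is exactly where the extension property of $A_{\infty}$-pre-categories enters.
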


The proof of this theorem will occupy this section.

\subsection{From essentially small to small}
\label{small}

Define the quasi-equivalence classes of small
$A_{\infty}\text{-}$(pre-)categories by requiring all the
intermediate $A_{\infty}\text{-}$(pre-)categories $\cC_i$ in the
chain to be small.

\begin{lemma}\label{ess.small-small}
Quasi-equivalence classes of essentially small
$A_{\infty}\text{-}$(pre-)categories are in bijection with
quasi-equivalence classes of small
$A_{\infty}\text{-}$(pre-)categories.
\end{lemma}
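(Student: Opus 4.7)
The natural map from quasi-equivalence classes of small $A_{\infty}\text{-}$(pre-)categories to quasi-equivalence classes of essentially small ones (induced by the tautological inclusion of chains) is well-defined, and the plan is to prove it bijective.

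For surjectivity, given an essentially small pre-category $\cC$, the plan is to construct a small sub-pre-category $\cC' \subset \cC$ such that the inclusion is a quasi-equivalence. First I pick a set $S_0 \subset Ob(\cC)$ containing one representative from each quasi-isomorphism class; this is a set by essential smallness. The full sub-pre-category on $S_0$ need not inherit the extension property, so I enlarge iteratively: define $S_{n+1}$ by adjoining to $S_n,$ for every finite family of transversal sequences in $S_n$ and every $X \in S_n,$ a choice (via AC) of extension witnesses $X_-, X_+$ guaranteed by the extension property for $\cC.$ The collection of finite families of transversal sequences in $S_n$ is a set, so each $S_{n+1}$ is a set, and $S := \bigcup_n S_n$ is a set. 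The full sub-pre-category on $S$ inherits all operations $m_n$ and satisfies the extension property by construction, so it is a small $A_{\infty}$-pre-category; the inclusion into $\cC$ is fully faithful on transversal hom spaces and hits every quasi-isomorphism class, hence is a quasi-equivalence.

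For injectivity, suppose small $\cC$ and $\cD$ are linked by a zigzag $\cC = \cC_0 \leftrightarrow \cC_1 \leftrightarrow \cdots \leftrightarrow \cC_n = \cD$ of quasi-equivalences of essentially small pre-categories. I would apply the same closure construction to each intermediate $\cC_i,$ seeded not only with quasi-isomorphism-class representatives but also with the images (or chosen preimages up to quasi-isomorphism) of the small sub-pre-categories $\cC_{i-1}'$ and $\cC_{i+1}'$ under the appropriate arrow of the zigzag. Running this seeding outward from the endpoints (which are already small) and then interleaving the extension-closure iterations among the $\cC_i',$ one arranges that the functors of the zigzag restrict to $A_{\infty}$-functors between the small sub-pre-categories; the restricted functors remain quasi-equivalences because quasi-isomorphism on hom spaces and essential surjectivity on quasi-isomorphism classes are preserved by restriction to a saturated sub-pre-category.

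The main obstacle is set-theoretic rather than structural: the closure procedure for the extension property keeps creating new witnesses outside the original representative set, so one must iterate and confirm at each stage that the resulting pre-category is still small and that all transversal hom spaces, all operations $m_n,$ and (in the zigzag) all functor images remain inside the chosen set. This is handled by the observation that at each stage only set-many finite transversal families appear, making the countable iteration safe, and by arranging the seed sets of the injectivity construction to already absorb the adjacent images so that compatibility across the zigzag is automatic.
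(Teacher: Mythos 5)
Your proof is correct and follows essentially the same route as the paper: the paper's construction is exactly your iterated closure (it defines an operation $T$ adjoining extension witnesses for all finite families of transversal sequences and sets $T^{\infty}(\cD)=\bigcup_n T^n(\cD)$ applied to a set of quasi-isomorphism-class representatives). The zigzag-compatibility argument you sketch for injectivity is the part the paper dismisses as ``straightforward checking,'' and your version of it is sound.
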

\begin{proof}We will just show how to construct a small $A_{\infty}\text{-}$(pre-)category starting from essentially
small one. All the rest checkings are straightforward.

The case of $A_{\infty}\text{-}$categories is obvious. Namely,
given essentially small $A_{\infty}\text{-}$category $\cC,$ we can
choose one object from each quasi-isomorphism class, and take the
corresponding full $A_{\infty}\text{-}$subcategory. By
construction, it is small and the inclusion
$A_{\infty}\text{-}$functor is a quasi-equivalence.

To treat the case of $A_{\infty}\text{-}$pre-categories, we need
the following non-canonical operation on the small subclasses. Let
$\cC$ be an $A_{\infty}\text{-}$pre-category, and $\cD\subset
Ob(\cC)$ be a small subclass (i.e. which is a set). For each
finite collection $S_1,\dots,S_n$ of transversal sequences in
$\cD,$ choose the objects $X_{\pm}=X_{\pm}(S_{\cdot})\in Ob(\cC)$
such that all the sequences $(X_-,S_i,X_+)$ are transversal. Let
$T(\cD)$ be the union of $\cD$ and all pairs
$(X_-(S_{\cdot}),X_+(S_{\cdot})).$ Then $T(\cD)$ is again a small
subclass of $Ob(\cC).$ Put
\begin{equation}T^{\infty}(\cD):=\bigcup\limits_{n=0}^{\infty}T^n(\cD).\end{equation}

Now, let $\cD$ be a small subclass of $Ob(\cC)$ which contains
precisely one object from each quasi-isomorphism class. Take the
full $A_{\infty}\text{-}$sub-pre-category
$\cE=T^{\infty}(\cD)\subset \cC.$ Clearly, $\cE$ is a small
$A_{\infty}\text{-}$pre-category by construction, and the
inclusion $A_{\infty}\text{-}$functor $\cE\hookrightarrow \cC$ is
a quasi-equivalence.
\end{proof}

In the rest of this section, we deal only with small
$A_{\infty}\text{-}$(pre-)categories.

\subsection{Minimal models}
\label{minimal}

\begin{defi}An $A_{\infty}$-(pre-)category is called minimal if $m_1=0.$\end{defi}

Define quasi-equivalence classes of minimal
$A_{\infty}\text{-}$(pre-)categories by requiring all the
intermediate $A_{\infty}\text{-}$(pre-)categories $\cC_i$ in the
chain to be minimal.

\begin{lemma}\label{red_to_min} Quasi-equivalence classes of $A_{\infty}\text{-}$(pre-)categories are in
bijection with quasi-equivalence classes of minimal
$A_{\infty}\text{-}$(pre-)categories.
\end{lemma}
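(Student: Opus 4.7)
The plan is to apply Kadeishvili's minimal model construction, adapted to the pre-category setting, and then to check that it respects quasi-equivalences in both directions.

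First, for each small $A_{\infty}\text{-}$pre-category $\cC$, I would construct a minimal $A_{\infty}\text{-}$pre-category $\cC^{\min}$ together with a quasi-equivalence $\iota:\cC^{\min}\to\cC$. The objects and transversality data are kept identical: $Ob(\cC^{\min})=Ob(\cC)$ and $(\cC^{\min})^n_{tr}=\cC^n_{tr}$. For each transversal pair $(X_1,X_2)$, set $\Hom_{\cC^{\min}}(X_1,X_2):=H^{\ast}(\Hom_{\cC}(X_1,X_2),m_1)$. Because $k$ is a field, I can choose a decomposition
$$\Hom_{\cC}(X_1,X_2)\cong H^{\ast}(\Hom_{\cC}(X_1,X_2))\oplus U_{12}\oplus m_1(U_{12}),$$
together with a projection $p_{12}$ onto cohomology, a section $i_{12}$, and a contracting homotopy $h_{12}$ for the acyclic summand.

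Next, I would define the higher products $m_n^{\min}$ and the Taylor coefficients $\iota_n$ of the functor by the classical Kadeishvili sum over rooted planar trees with $n$ leaves: internal vertices are labelled by the operations $m_k$ of $\cC$, internal edges by the homotopies $h$, leaves by the sections $i$, and the root by $p$ (for $m_n^{\min}$) or by $i$ (for $\iota_n$). The key observation that makes this work in the pre-category framework is the sub-sequence axiom: for a transversal sequence $(X_0,\dots,X_n)$, every sub-sequence is again transversal, so every intermediate $m_k$ appearing in a tree sum is in fact defined on the appropriate morphism spaces. The $A_{\infty}\text{-}$relations for $m_n^{\min}$ and the $A_{\infty}\text{-}$functor identities for $\iota$ then follow formally from those of $\cC$, exactly as in the classical case. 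By construction $\iota_1$ is a quasi-isomorphism on every transversal pair and the object map is the identity, so $\iota$ is a quasi-equivalence, and the class of quasi-isomorphisms in $\cC^{\min}$ corresponds under $\iota_1$ to that of $\cC$, which makes the extension property for $\cC^{\min}$ automatic.

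To finish, I would show that this minimalization respects quasi-equivalences. Given a quasi-equivalence $F:\cA\to\cB$ and chosen minimal models $\iota_{\cA}:\cA^{\min}\to\cA$ and $\iota_{\cB}:\cB^{\min}\to\cB$, I would lift $F\circ\iota_{\cA}$ along $\iota_{\cB}$ to an $A_{\infty}\text{-}$functor $F^{\min}:\cA^{\min}\to\cB^{\min}$ by another tree sum using the contracting homotopies on $\cB$. Since $F^{\min}$ and $F$ agree on objects and induce the same functor on cohomology, $F^{\min}$ is again a quasi-equivalence. Consequently, any chain of quasi-equivalences connecting two small $A_{\infty}\text{-}$pre-categories yields a parallel chain of minimal quasi-equivalences between their minimal models, and, since a minimal pre-category is its own minimal model, the natural map from minimal to general quasi-equivalence classes is bijective. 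The main obstacle is the very first step, namely verifying that the Kadeishvili tree formulas make sense in the partially defined setting of pre-categories; once the sub-sequence axiom is used to see that every intermediate operation appearing in a tree is actually defined, the remainder proceeds along classical lines.
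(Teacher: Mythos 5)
Your proposal is correct and follows essentially the same route as the paper: both split each $\Hom$-complex (using that $k$ is a field) into its cohomology plus an acyclic complement with a contracting homotopy, transfer the $A_{\infty}$-structure by the standard homotopy-transfer formulas (the sub-sequence transversality axiom guaranteeing all intermediate operations are defined), and obtain quasi-equivalences between $\cC$ and $\cC_{\min}$ that convert any chain of quasi-equivalences into a parallel chain of minimal ones. The only cosmetic difference is that the paper records both transferred functors $\cC_{\min}\to\cC$ and $\cC\to\cC_{\min}$ directly, whereas you recover the second via a lifting argument.
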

\begin{proof}Let $\cC$ be an $A_{\infty}\text{-}$pre-category. For each pair of objects $(X_1,X_2)$
(resp. for $(X_1,X_2)\in \cC_{tr}^2$) choose a direct sum
decomposition $\Hom(X_1,X_2)=K(X_1,X_2)\oplus Ac(X_1,X_2),$ where
$K(X_1,X_2)$ is a subcomplex with zero differential, and
$Ac(X_1,X_2)$ is an acyclic subcomplex. Denote by
$i(X_1,X_2):K(X_1,X_2)\to \Hom(X_1,X_2)$ the natural inclusion,
$p(X_1,X_2):\Hom(X_1,X_2)\to Ac(X_1,X_2)$ the natural projection
(both $i$ and $p$ are quasi-isomorphisms). Choose some contracting
homotopy $h=h(X_1,X_2):Ac(X_1,X_2)\to Ac(X_1,X_2),$ such that
$h^2=0.$ Finally, denote by $H(X_1,X_2):\Hom(X_1,X_2)\to
\Hom(X_1,X_2)$ the extension of $h$ by zero.

Starting from the data $i(X_1,X_2),$ $p(X_1,X_2)$ and
$H(X_1,X_2),$ and applying the standard formulae for transferring
$A_{\infty}\text{-}$structures (see \cite{KS}), one obtains:

1) An $A_{\infty}$-(pre-)category $\cC_{min}$ with
$Ob(\cC_{min})=Ob(\cC)$ and
$\Hom_{\cC_{min}}(X_1,X_2)=K(X_1,X_2).$ Also, in the case of
$A_{\infty}\text{-}$pre-categories, we have
$(\cC_{min})_{tr}^n=\cC_{tr}^n.$

2) A quasi-equivalence $F:\cC_{min}\to \cC,$ such that $F(X)=X$
and $f_1=i.$

3)  A quasi-equivalence $G:\cC\to \cC_{min},$ such that $G(X)=X$
and $g_1=p.$

Lemma follows easily.
\end{proof}

Hence, it suffices to deal only with minimal
$A_{\infty}\text{-}$(pre-)categories.

\subsection{Hochshild cohomology of graded pre-categories}
\label{Hoch}

\begin{defi}Define a ($\Z$- or $\Z/2$-)graded pre-category as an $A_{\infty}\text{-}$pre-category with $m_n=0$ for
$n\ne 2.$

Define a functor between graded pre-categories to be an
$A_{\infty}\text{-}$functor with $f_n=0$ for $n\ne 1.$ We say that
such a functor is an equivalence if it is a quasi-equivalence of
$A_{\infty}\text{-}$pre-categories.\end{defi}

\begin{prop}Let $\cC$ be a graded pre-category. Then it can be canonically extended to an actual graded category
$\cC_{full}$ with $Ob(\cC_{full})=Ob(\cC),$ together with a
natural equivalence of graded pre-categories
$\iota:\cC\to\cC_{full},$ $\iota(X)=X$ for any $X\in
Ob(\cC).$\end{prop}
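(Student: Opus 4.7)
The plan is to construct $\cC_{full}$ by extending the morphism spaces of $\cC$ to all pairs of objects via a filtered-colimit construction over the ``transversal refinements'' provided by the extension property. The key observation in the graded ($m_1=0$) setting is that a quasi-isomorphism $q\in\Hom^0_{\cC}(X_1,X_2)$ acts invertibly on all transversal Hom spaces via $m_2(q,-)$ and $m_2(-,q)$, so the transition maps in the colimit system are automatically isomorphisms and the colimit stabilizes on any single term. Concretely, for a pair $(X_1,X_2)$ of objects I would introduce an indexing category $J(X_1,X_2)$ whose objects are tuples $(X_1',X_2',q_1,q_2)$ with $q_1:X_1'\to X_1$ and $q_2:X_2\to X_2'$ quasi-isomorphisms and all relevant sub-sequences transversal, and whose morphisms are further refinements. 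The extension property makes $J(X_1,X_2)$ nonempty and filtered (a common refinement of any finite subdiagram is obtained by a single application of the extension property with input all transversal sequences occurring in the subdiagram). One then sets
\[
\Hom_{\cC_{full}}(X_1,X_2):=\varinjlim_{J(X_1,X_2)}\Hom_{\cC}(X_1',X_2'),
\]
with transitions given by $m_2$-composition with the refining quasi-isomorphisms.

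Second, I would define composition: given $\alpha\in\Hom_{\cC_{full}}(X_1,X_2)$ and $\beta\in\Hom_{\cC_{full}}(X_2,X_3)$, use the extension property applied to $X_2$ to find a common refinement on which both $\alpha$ and $\beta$ are represented transversally, then set $\beta\circ\alpha:=m_2(\alpha,\beta)$; well-definedness follows by filteredness, and associativity from associativity of $m_2$ on transversal quadruples (again reached by extension). The identity at $X$ is constructed from the quasi-isomorphisms $q^-:X^-\to X$ and $q^+:X\to X^+$ produced by applying the extension property to $X$ with $S_1=(X)$ (so $(X^-,X,X^+)$ is transversal): the element $m_2(q^-,q^+)\in\Hom_{\cC}(X^-,X^+)$ represents a class in $\Hom_{\cC_{full}}(X,X)$ which, by the invertibility of $m_2(q^-,-)$ and $m_2(-,q^+)$ together with associativity, serves as a strict two-sided unit. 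The functor $\iota:\cC\to\cC_{full}$ is identity on objects and, on transversal pairs, the structure map of the colimit; it is an equivalence of graded pre-categories because on transversal pairs it is an isomorphism of Hom spaces (colimit stability), and essential surjectivity is automatic since $Ob(\cC_{full})=Ob(\cC)$.

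The main obstacle is the construction and verification of identity morphisms, since $\cC$ itself possesses no identities to start with: one must show that $1_X$ is independent of the choices of $q^\pm$ and that it is strictly two-sided under the newly-defined composition. Both points reduce, via repeated application of the extension property, to quadratic associativity relations in a single sufficiently large transversal sequence, so that the ordinary relations for $m_2$ in $\cC$ can be invoked uniformly. A secondary verification is that composition on $\cC_{full}$ is well-defined on colimit classes, which again follows from filteredness of the indexing categories together with one more extension-property argument.
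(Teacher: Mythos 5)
The paper's own ``proof'' is a single sentence: $\cC_{full}$ is obtained by formally inverting the quasi-isomorphisms of $\cC$, with all checking left to the reader. Your construction is genuinely different in form: instead of a localization by generators and relations, you realize $\Hom_{\cC_{full}}(X_1,X_2)$ as a filtered colimit of honest transversal Hom spaces $\Hom_{\cC}(X_1',X_2')$ over left/right refinements by quasi-isomorphisms, exploiting the fact that in the graded ($m_1=0$) setting composition with a quasi-isomorphism is an actual isomorphism of graded vector spaces, so the colimit is constant. This is essentially a calculus-of-fractions presentation of the same localization, and it buys something the paper's one-liner does not: it makes manifest that $\iota$ is fully faithful on transversal pairs (each colimit is computed by any single term, in particular by $\Hom_{\cC}(X_1,X_2)$ itself when the pair is transversal), and it sidesteps the delicacies of defining a $k$-linear graded localization by formal words. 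Your identification of the unit construction as the main new difficulty is also accurate, and your candidate $m_2(q^-,q^+)\in\Hom_{\cC}(X^-,X^+)$ is the right representative.

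One step is stated too loosely to pass as is: the claim that $J(X_1,X_2)$ is filtered ``by a single application of the extension property.'' The extension property hands you quasi-isomorphisms $X_-\to X$ and $X\to X_+$ into and out of the \emph{original} object $X$, together with transversality of $(X_-,S_i,X_+)$; it does not directly produce the connecting quasi-isomorphisms $X_-\to X_1'$, $X_-\to X_1''$ between two given refinements, nor the commutativity of the resulting triangles over $X_1$. To repair this you must combine the extension property (applied to $X_1$ with the sequences $(X_1',X_1)$, $(X_1'',X_1)$, etc., to secure the needed transversalities) with the invertibility of $m_2(\cdot,q_1)\colon\Hom(X_-,X_1')\to\Hom(X_-,X_1)$ to solve for the unique lift $g$ with $q_1\circ g=f_-$, and then a two-out-of-three argument to see that $g$ is again a quasi-isomorphism. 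The same uniqueness shows that any endomorphism of an object of $J(X_1,X_2)$ induces the identity on the corresponding Hom space, which is what actually guarantees that the colimit of a diagram of isomorphisms is computed by a single term. These are exactly the verifications the paper sweeps under ``straightforward checking,'' so your outline is acceptable, but the filteredness assertion as written is not yet a proof.
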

\begin{proof}Category $\cC_{full}$ can be obtained by formal inverting of quasi-isomorphisms in $\cC.$ We leave the
straightforward checking to the reader.\end{proof}

Let $\cC$ be a graded pre-category. Define the bigraded Hochshild
complex of $\cC$ by the following formula:

\begin{equation}CC^{i,j}(\cC)=\prod\limits_{(X_0,\dots,X_i)\in\cC_{tr}^i}
\Hom^j(\bigotimes\limits_{1\leq l\leq
i}\Hom_{\cC}(X_{l-1},X_l),\Hom_{\cC_{full}}(X_0,X_i)).\end{equation}

Here $i\in \Z_{\geq 0},$ and $j\in \Z$ (resp. $j\in \Z/2$). We
write $\cC_{full}$ in the above equation, because for $i=0$ the
sequence $(X_0,X_0)$ is not transversal in general.

The differential is the standard Hochshild one. It maps
$CC^{i,j}(\cC)$ to $CC^{i+1,j}(\cC).$ Namely, for $\phi\in
CC^{n,p}(\cC),$ we have
\begin{multline}
\partial(\phi)(a_{n+1},\dots,a_1)=\sum\limits_{i=1}^n
(-1)^{n-i}\phi(a_{n+1},\dots,a_{i+1}a_i,\dots,a_1)+\\(-1)^n\phi(a_{n+1},\dots,a_2)a_1+
(-1)^{p\deg(a_{n+1})+1}
a_{n+1}\phi(a_n,\dots,a_1).
\end{multline}

  Therefore, we have bigraded
Hochshild cohomology $HH^{i,j}(\cC).$

Now suppose that we have a fully faithful functor $F:\cC\to \cD$
between graded pre-categories. Clearly, it induces a morphism of
complexes (preserving the gradings)
\begin{equation}F^*:CC^{\cdot,\cdot}(\cD)\to CC^{\cdot,\cdot}(\cC).\end{equation}

\subsection{Main Lemma}
\label{main_lemma}

The key statement in our proof of Main Theorem is the following.

\begin{lemma}\label{iso_on_Hoch}Let $F:\cC\to\cD$ be an equivalence of graded
pre-categories. Then the induced morphism of complexes
\begin{equation}F^*:CC^{\cdot,\cdot}(\cD)\to CC^{\cdot,\cdot}(\cC)\end{equation}
induces isomorphisms $F^*:HH^{i,j}(\cD)\to HH^{i,j}(\cC).$
\end{lemma}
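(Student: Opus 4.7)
The plan is to factor the equivalence into a simpler step and a fully faithful inclusion, and then exploit the simplicial structure carried by the transversal sequences. First, I would write $F = \iota \circ F'$, where $F(\cC) \subset \cD$ is the full sub-pre-category on the image of $F$, the map $F': \cC \to F(\cC)$ is bijective on objects, and $\iota: F(\cC) \hookrightarrow \cD$ is the inclusion. Since $F$ is an equivalence, $F'$ induces isomorphisms on all Hom complexes and preserves transversality both directions, so $F'^*$ is already an isomorphism of Hochschild complexes on the nose. This reduces the lemma to showing that $\iota^*$ is a quasi-isomorphism for a fully faithful inclusion of a full sub-pre-category $\iota: \cC' \hookrightarrow \cD$ whose image meets every quasi-isomorphism class of $\cD$.

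Next, I would reformulate the Hochschild complex as the cochains of a simplicial local system. The transversal sequences form a semi-simplicial set $N_{tr}^\bullet(\cD)$ with face maps given by omitting objects (well-defined by the fact that subsequences of transversal sequences are transversal), and to each simplex $(Y_0,\dots,Y_n)$ one attaches the complex
\[
\Hom\bigl(\bigotimes_{l}\Hom_\cD(Y_{l-1},Y_l),\,\Hom_{\cD_{full}}(Y_0,Y_n)\bigr).
\]
With the Hochschild differential this gives $CC^{\cdot,\cdot}(\cD)$, and $\iota^*$ becomes restriction along the sub-semi-simplicial set $N_{tr}^\bullet(\cC') \subset N_{tr}^\bullet(\cD)$.

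To show $\iota^*$ is a quasi-isomorphism I would construct a homotopy using the extension property. Given any transversal sequence $(Y_0,\dots,Y_n)$ in $\cD$, choose quasi-isomorphisms $Y_k \sim X_k$ with $X_k \in \cC'$; iterated application of the extension property to finite collections of transversal sequences then produces thickened transversal configurations interpolating between $(Y_0,\dots,Y_n)$ and $(X_0,\dots,X_n)$. By the very definition of quasi-isomorphism in a pre-category, each such interpolation induces a pointwise quasi-isomorphism on the coefficient complexes via $m_2$-composition with $f_k$, so the simplicial local system structure lets one assemble a homotopy inverse to $\iota^*$ on the level of cohomology.

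The main obstacle, and the reason for introducing simplicial local systems rather than a naive pointwise argument, will be coherence: the extension property is non-canonical and only provides transversal extensions for finitely many sequences at a time, so no single global choice of replacements exists. The resolution is to exploit the fact that $CC^{i,j}$ is a \emph{product} over transversal sequences — this allows choices to be made independently at each simplex — and then to track compatibility with the Hochschild differential through the simplicial local system bookkeeping. Organizing the local data into a genuine chain homotopy on $CC^{\cdot,\cdot}$ rather than settling for pointwise quasi-isomorphisms is the technical heart of the argument.
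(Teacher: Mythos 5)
Your overall framework --- recasting $CC^{\cdot,\cdot}$ as cochains of a local system on the (semi-)simplicial set of transversal sequences and feeding the extension property into an acyclicity statement --- is the same as the paper's, but two steps do not go through as written. First, the factorization $F=\iota\circ F'$ does not achieve the claimed reduction: the full sub-pre-category $F(\cC)\subset\cD$ inherits its transversal sequences from $\cD$, and these may strictly contain the images of $\cC_{tr}^{\bullet}$ (transversality is extra structure, not something detected by the Hom spaces); moreover $F$ need not be injective on objects. Hence $F'^*$ is a restriction map between products over genuinely different index classes, not an isomorphism ``on the nose,'' and proving it is a quasi-isomorphism is a problem of exactly the same difficulty as the one you reduced to. The paper reduces in the opposite direction: it replaces $\cD$ by a skeleton $\cE$ of $\cD_{full}$, an honest graded category in which every sequence is transversal, and proves the special case of an equivalence into an actual graded category that induces a bijection from quasi-isomorphism classes onto objects; the general case follows by applying this twice.

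Second, the step you yourself flag as ``the technical heart'' --- assembling the non-canonical, finitely-many-sequences-at-a-time extensions into an actual chain homotopy on $CC^{\cdot,\cdot}$ --- is left unproved, and it is the entire content of the lemma. The paper's resolution is to never build such a homotopy: it realizes $\bigoplus_j CC^{\cdot,j}(\cD)$ and $\bigoplus_j CC^{\cdot,j}(\cC)$ as $\Hom(\cP_{\cdot},\cA)$ and $\Hom(\cQ_{\cdot},\cA)$ for two chain complexes of \emph{projective} local systems ($\cP_n$ indexed by all sequences of objects of $\cD$, $\cQ_n$ by transversal sequences of $\cC$, with a map $\Phi:\cQ_{\cdot}\to\cP_{\cdot}$ inducing $F^*$), and then proves the Sublemma that $\cQ_{\cdot}$ is also a resolution of $\un{k}$. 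Acyclicity of $\cQ_{\cdot}(T)$ is checked one simplex $T$ and one cycle at a time by an explicit contraction: for the finitely many transversal sequences $S_j$ appearing in a given cycle, the extension property supplies a single object $\widetilde{Y}$ quasi-isomorphic to $Y_0$ making all $(\widetilde{Y},S_j)$ transversal, and the bijection on quasi-isomorphism classes forces $F(\widetilde{Y})=F(Y_0)$, so the lifted chain maps to the right place. Because only acyclicity of a complex of vector spaces is needed --- not a coherent global homotopy --- the non-canonicity you worry about is harmless, and the quasi-isomorphism on $\Hom(-,\cA)$ then comes for free from the standard comparison of projective resolutions. Without this dualization your argument has no mechanism for actually producing the cohomological inverse to $\iota^*$.
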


We will prove this lemma in the framework of local systems on
simplicial sets.  Recall the category $\Delta,$ which is a
(non-full) subcategory of the category of sets. Its objects are
denoted by $[n]=\{0,1\dots,n\},$ where $n\in \Z_{\geq 0}.$
Further,
\begin{equation}\Hom_{\Delta}([m],[n])=\{\text{non-decreasing maps of sets }f:[m]\to [n]\}.\end{equation}

The category $\Delta$ admits the well-known system of generating
morphisms, which consists of the face maps $\sigma_i^n:[n]\to
[n+1],$ $i=0,1,\dots,n+1,$ and degeneration maps $s_i^n:[n]\to
[n-1],$ $i=0,1\dots,n-1.$ Here
\begin{equation}\sigma_i^n(j)=\begin{cases}j & \text{for }j\leq i-1\\
j+1 & \text{for }j\geq i,\end{cases}\quad
s_i^n(j)=\begin{cases}j & \text{for }j\leq i\\
j-1 & \text{for }j\geq i+1.\end{cases}\end{equation}

A simplicial set $\cX_{\cdot}$ is a functor $\cX:\Delta^{op}\to
\Set,$ $\cX_n=\cX([n]).$ We treat $\cX$ also as a category (which
we denote by the same letter). Its objects are elements $X_n\in
\cX_n.$ Further,
\begin{equation}\Hom_{\cX}(X_m,X_n)=\{f:[m]\to [n] |
f^{*}(X_n)=X_m\},
\end{equation}
and the composition is obvious.

Given a simplicial set $\cX_{\cdot},$ a (cohomological) local
system of $k\text{-}$vector spaces on $\cX_{\cdot}$ is a functor
$\cA:\cX\to k\text{-}Vect.$ In particular, we have a constant
local system $\un{k}$ with fiber $k.$ Namely, $\un{k}(X_n)=k,$ and
$\un{k}(f)=\id$ for $f:X_m\to X_n.$

Local systems obviously form an abelian category. It is also easy
to see that it has enough projectives. Indeed, for each object
$X_n\in \cX,$ we have the corresponding local system $P_{X_n},$
$P_{X_n}(Y_m)=k[\Hom_{\cX}(X_n,Y_m)],$ where $k[\cdot]$ means the
formal linear envelope. By Ioneda Lemma, we have that
\begin{equation}\Hom(P_{X_n},\cA)=\cA(X_n).\end{equation}
Hence, $P_{X_n}$ is projective. Each local system has a resolution
by direct sums of such projectives.

Cohomology of a local system $\cA$ is defined as
$Ext^{\cdot}(\un{k},\cA).$

Now recall the well-known projective resolution of the local
system $\un{k}.$ Put
\begin{equation}\cP_n=\bigoplus\limits_{X_n\in\cX_n}P_{X_n}.\end{equation} The differential $\partial:\cP_n\to\cP_{n-1}$ is
the sum of maps $(-1)^i\sigma_i^{n-1}:P_{X_n}\to
P_{(\sigma_i^{n-1})^*(X_n)}.$ The standard complex computing
cohomology of $\cA$ is just a complex of morphisms from this
projective resolution to $\cA.$

\begin{proof}[Proof of lemma \ref{iso_on_Hoch}.] Define graded category $\cE$ to be a full subcategory of $\cD_{full},$
which contains exactly one object from each isomorphism class in
$\cD.$ Choose some functor $G:\cD_{full}\to\cE,$ which equals to
identity on $\cE,$ and denote by the same letter its restriction
to $\cD.$ To proof the desired quasi-isomorphism, it suffices to
proof that maps $G^*:CC^{\cdot,\cdot}(\cE)\to
CC^{\cdot,\cdot}(\cD)$ and $(GF)^*:CC^{\cdot,\cdot}(\cE)\to
CC^{\cdot,\cdot}(\cC)$ are quasi-isomorphisms.

Therefore, we may assume that $\cD$ is itself an actual graded
category, and the functor $F$ induces a bijection between the set
of quasi-isomorphism classes of objects in $\cC$ and the set
$Ob(\cD).$

Now define the simplcial set $\cX_{\cdot}$ by the formula
$\cX_n=Ob(\cD)^{n+1}.$ Further, for $(Y_0,\dots,Y_n)\in\cX_n$ and
$f:[m]\to [n],$ put
$f^*(Y_0,\dots,Y_n)=(Y_{f(0)},Y_{f(1)},\dots,Y_{f(m)}).$

Define the local system $\cA$ on $\cX$ by the formula
\begin{equation}\cA(Y_0,\dots,Y_n)=\bigoplus\limits_{j}\Hom^j(\bigotimes\limits_{1\leq l\leq
n}\Hom_{\cD}(Y_{l-1},Y_l),\Hom_{\cD}(Y_0,Y_n)).\end{equation}

Further, for $f:[m]\to [n],$ and homogeneous
$\phi^m\in\cA(f^*(Y_0,\dots,Y_n)),$ $a_i\in\Hom_{\cD}(Y_{i-1},Y_n)$ we
put
\begin{multline}f(\phi^m)(a_n,\dots,a_1)=\\(-1)^{\epsilon(f)} a_n\dots a_{f(m)+1}
\phi^m(a_{f(m)}\dots a_{f(m-1)+1},\dots,a_{f(1)}\dots
,a_{f(0)+1})a_{f0)}\dots a_1,\end{multline} where $\epsilon
(f)=\deg(\phi^m)\sum\limits_{i=f(m)+1}^{n}\deg(a_i)+\frac{(n-m)(n+m+1)}2,$
and the product over the empty set equals to the corresponding
identity morphism.

Take the standard projective resolution $\cP_{\cdot}$ of the local
system $\un{k},$ as above. Then the complex
$\Hom(\cP_{\cdot},\un{k})$ is naturally isomorphic to the complex
$\bigoplus\limits_j CC^{\cdot,j},$ hence
\begin{equation}H^{\cdot}(\cA)\cong \bigoplus\limits_j HH^{\cdot,j}(\cD).\end{equation}

Now we would like to express $HH^{\cdot,\cdot}(\cC)$ in similar
terms. For $S\in \cC_{tr}^{n+1},$ put $Q_S:=P_{F(S)}$ --- a
projective local system on $\cX.$ If $S=(Y_0,\dots,Y_n),$ then put
$(\sigma_i^{n-1})^*(S):=(Y_0,\dots,\widehat{Y_i},\dots,Y_n).$
Clearly, $F((\sigma_i^{n-1})^*(S))=(\sigma_i^{n-1})^*(F(S)).$

Define a chain complex $\cQ_{\cdot}$ as follows. Put
$\cQ_n=\bigoplus\limits_{S\in \cC_{tr}^{n+1}}Q_S.$ Further, the
differential $\partial:\cQ_n\to\cQ_{n-1}$ is the sum of maps
$(-1)^i\sigma_i^{n-1}:\cQ_{S}\to \cQ_{(\sigma_i^{n-1})^*(S)}.$ We
have obvious projection $\cQ_0/Im(\partial)\to \un{k}.$

It is clear that the complex $\Hom(\cQ_{\cdot},\cA)$ is naturally
quasi-isomorphic to the complex
$\bigoplus\limits_jCC^{\cdot,j}(\cC).$ We have an obvious morphism
of complexes $\Phi:\cQ_{\cdot}\to \cP_{\cdot},$ with non-zero
components being identity maps $Q_S\to P_{F(S)}.$ The morphism
$\Phi$ is compatible with projections $\cQ_0\to \un{k},$ $\cP_0\to
\un{k}.$ Also, we have the commutative diagram

\begin{equation}
\begin{CD}
\bigoplus\limits_jCC^{i,j}(\cD) @>F^*>> \bigoplus\limits_jCC^{i,j}(\cD)\\
@V\cong VV                              @V\cong VV\\
\Hom(\cP_i,\cA) @>\cdot\circ\Phi >> \Hom(\cQ_i,\cA).
\end{CD}
\end{equation}

Therefore, we are left to prove that $\cQ_{\cdot}$ is a resolution
of $\un{k}.$

\noindent{\bf Sublemma.}{\it The complex $\cQ_{\cdot}$ is a
resolution of $\un{k}.$}

\begin{proof}For convenience put $\cQ_{-1}=\un{k},$ and treat $k=\un{k}(Y_0,\dots,Y_m)$ as $k[\Hom([-1],[n])],$
where $[-1]:=\varnothing.$ So we need to prove that the complex
$\cQ_{\cdot}$ is acyclic.

 Fix some sequence $T=(Y_0,\dots,Y_n)\in
\cX_n,$ $n\geq 0.$ and consider the complex $\cQ_{\cdot}(T)$ of
vector spaces. We have that $\cQ_m(T)$ consists of linear
combinations of pairs $(f,S),$ where $f:[m]\to [n],$ $S\in
\cC_{tr}^{m+1},$ such that $F(S)=f^*(T).$ Take some closed element
$a=\sum\limits_{j=1}^p\lambda_j(f_j,S_j)\in \cQ_m(T),$ $m\geq -1.$
We would like to construct its (non-canonically defined) pre-image
$H(a)\in \cQ_{m+1}(T)$ under the differential $\partial.$

Define the maps $g_j:[m+1]\to [n],$ $j=1,\dots,p,$ by the formula
\begin{equation}g_j(l)=\begin{cases}f_j(l-1) & \text{for }l>0\\
0 & \text{for }l=0.\end{cases}
\end{equation}
Further, by the definition of an $A_{\infty}\text{-}$pre-category,
there exists an object $\widetilde{Y}\in Ob(\cC)$ and a
quasi-isomorphism $\widetilde{Y}\to Y_0$ such that all sequences
$(\widetilde{Y},S_j)$ are transversal. Since $F$ induces a
bijection between quasi-isomorphism classes in $\cC$ and objects
in $\cD,$ we have that $F(\widetilde{Y})=F(Y_0).$ Therefore,
$F(\widetilde{Y},S_j)=(F(Y_0),F(S_j))=g_j^*(T).$ Put
\begin{equation}H(a)=\sum\limits_{j=1}^p\lambda_j(g_j,(\widetilde{Y},S_j)).\end{equation}
Then from the closedness of $a$ we immediately obtain that
$\partial(H(a))=a.$ This proves Sublemma.
\end{proof}
Lemma is proved.
\end{proof}

\subsection{$A_{\infty}\text{-}$structures on a graded pre-category}
\label{A-structures}

\begin{defi}An $A_{\infty}\text{-}$structure on a graded pre-category $\cC$ is a collection of maps $m_n,$ $n\geq 3,$ $\deg(m_n)=2-n$ for all transversal
sequences, such that together with $m_2(a,b)=ab$ and $m_1=0$ they
give a structure of $A_{\infty}\text{-}$pre-category on $\cC.$

Two $A_{\infty}\text{-}$structures $m$ and $m'$ on $\cC$ are
called strongly homotopic if there exists an
$A_{\infty}\text{-}$morphism $F:(\cC,m)\to (\cC,m')$ with $F(X)=X$
for $X\in Ob(\cC),$ and $f_1=\id.$ In this case $F$ is called a
strong homotopy between $m$ and $m'.$
\end{defi}

Formal collections of maps $f_n:\bigotimes_{1\leq i\leq
n}\Hom(X_{i-1},X_i)\to \Hom(X_0,X_n)$ of degree $1-n$ for all
transversal sequences $(X_0,\dots,X_n)\in\cC_{tr}^{n+1},$ $n\geq
1,$ with $f_1=\id,$ form a group $G_{\cC}.$ The product of $f$ and
$g$ is given by the same formula as the composition of
$A_{\infty}\text{-}$functors. This group acts on the set
$A_{\infty}(\cC)$ of $A_{\infty}\text{-}$structures on $\cC.$
Namely, $f(m)=m'$ iff $f$ is an $A_{\infty}\text{-}$morphism from
$(\cC,m)$ to $(\cC,m').$ Tautologically, two
$A_{\infty}\text{-}$structures are strongly homotopic iff they lie
in the same orbit of this action.

The following Lemmas are well-known for
$A_{\infty}\text{-}$algebras, and the proof is in fact
straightforward. We omit the proof.

\begin{lemma}\label{obstr_m_n}Let $(m_3,\dots,m_{n-1})$ be partially defined $A_{\infty}\text{-}$structure on a graded pre-category $\cC,$
i.e. the maps $m_{\leq n-1}$ satisfy all the required equations
which do not contain $m_{\geq n}.$ Write the first
$A_{\infty}\text{-}$constraint containing $m_n$ in the form
\begin{equation}\partial(m_n)=\Phi,\end{equation}
where $\partial$ is the Hochshild differential and
$\Phi=\Phi(m_3,\dots,m_{n-1})$ is quadratic expression. Then we
always have $\partial(\Phi)=0.$\end{lemma}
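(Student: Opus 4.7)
The plan is to mimic the standard coderivation argument for $A_\infty$-algebras, with care that every operation is evaluated only on transversal sequences (which causes no trouble, since by definition every subsequence of a transversal sequence is transversal).

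First I would package the partial data. Fix a transversal sequence $(X_0,\dots,X_N)$ and let $B = b_2+b_3+\dots+b_{n-1}$ be the unique degree $+1$ coderivation of the tensor coalgebra on $\bigoplus_{0\le i<j\le N}\Hom(X_i,X_j)[1]$ whose Taylor coefficients are $m_2,m_3,\dots,m_{n-1}$. The hypothesis that $(m_3,\dots,m_{n-1})$ is a partial $A_\infty$-structure (i.e.\ that all the equations of \eqref{eq_for_m_n} not involving $m_{\ge n}$ hold) is exactly the statement that the arity-$k$ Taylor coefficient of $B^2$ vanishes for every $k\le n-1$. Tracing the statement $\partial(m_n)=\Phi$ against the form \eqref{eq_for_m_n} of the $A_\infty$-relations, $\Phi$ is (up to a fixed sign) the arity-$n$ Taylor coefficient of $B^2$.

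Now $B^2 = \tfrac12[B,B]$ is itself a coderivation, and $B\circ B^2 = B^3 = B^2\circ B$ holds tautologically by associativity of composition. I would therefore read off the arity-$(n+1)$ Taylor coefficient of the equality $B\circ B^2 - B^2\circ B = 0$. Since $B$ has components only in arities $2,\dots,n-1$, any contribution to arity $n+1$ forces the $B^2$-factor to have arity in $\{3,\dots,n\}$; by the partial $A_\infty$-hypothesis all arities $\le n-1$ vanish, so only the arity-$n$ piece $\Phi$ of $B^2$ survives, and the matching $B$-factor is forced to be $b_2$. Thus the arity-$(n+1)$ coefficient of $B\circ B^2$ reduces to the two outer terms $m_2(a_{n+1},\Phi(a_n,\dots,a_1))$ and $m_2(\Phi(a_{n+1},\dots,a_2),a_1)$, while the arity-$(n+1)$ coefficient of $B^2\circ B$ reduces to the inner terms $\Phi(\dots,m_2(a_{i+1},a_i),\dots)$ for $1\le i\le n$. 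Comparing with the explicit Hochschild differential written down in Subsection~\ref{Hoch}, this difference is precisely $\pm\partial(\Phi)$, and the identity forces $\partial(\Phi)=0$.

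The only real obstacle I anticipate is bookkeeping of signs: one must check that the Koszul signs produced by the coderivation calculus align with the explicit signs in the formula for the Hochschild differential $\partial$ given earlier, and with the sign with which $\Phi$ was identified with the arity-$n$ coefficient of $B^2$. This is routine but tedious. Transversality plays no role beyond ensuring that every operation appearing in the computation (including the inner $m_2$ insertions and the outer $m_2$'s that multiply $\Phi$ by a single $a_i$) is evaluated on a subsequence of a transversal sequence, which is transversal by hypothesis, so the whole argument makes sense in the pre-category setting exactly as for ordinary $A_\infty$-algebras.
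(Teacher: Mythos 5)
Your argument is correct and is exactly the standard coderivation computation that the paper invokes when it declares this lemma ``well-known for $A_\infty$-algebras'' and omits the proof; your added observation that every operation occurring in the identity is evaluated on a subsequence of a transversal sequence (hence transversal) is precisely the point needed to transport that argument to the pre-category setting. The only blemish is an off-by-one in your arity bookkeeping: since $\partial$ raises the Hochschild degree by one, $\Phi=\partial(m_n)$ has $n+1$ inputs and is the arity-$(n+1)$ Taylor coefficient of $B^2$ (up to sign), so the identity $\partial(\Phi)=0$ is read off from the arity-$(n+2)$ coefficient of $B\circ B^2-B^2\circ B=0$, using that the hypothesis kills the arity-$k$ coefficients of $B^2$ for all $k\le n$; this does not affect the structure or validity of the argument.
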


\begin{lemma}\label{perturb_m_n}Let $m$ and $m'$ be two $A_{\infty}$-structures on a graded pre-category $\cC,$ with
$m_i=m_i'$ for $i<n.$ Let $f:(\cC,m)\to (\cC,m')$ be an
$A_{\infty}\text{-}$morphism with $f_1=\id,$ and $f_i=0$ for
$2\leq i\leq n-2.$ Then $m_i'=m_i$ for $i\leq n-1,$ and
$m_n'=m_n+\partial(f_{n-1}).$\end{lemma}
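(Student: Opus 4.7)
The plan is to unpack the $A_\infty$-morphism equation satisfied by $f:(\cC,m)\to(\cC,m')$ at Taylor order $n$, and to use the vanishing hypothesis on $f_i$ for $2\leq i\leq n-2$ together with minimality ($m_1=m_1'=0$) to reduce the equation to the claimed identity. The equations at orders $<n$ involve only $m_i, m_i'$ with $i<n$, the trivial component $f_1=\id$, and the (possibly zero) $f_{n-1}$; they are automatically satisfied under the hypothesis $m_i=m_i'$ for $i<n$, so all new content lives in the order-$n$ equation, and the first conclusion $m_i'=m_i$ for $i\leq n-1$ is just a restatement of the hypothesis.

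In the general $A_\infty$-morphism relation at order $n$, the left-hand side is a sum indexed by ordered partitions $n=l_1+(l_2-l_1)+\dots+(n-l_{i-1})$, each contributing $\pm m_i^{m'}(f_{l_1},\dots,f_{n-l_{i-1}})$. Since $f_k=0$ for $2\leq k\leq n-2$, the only partitions with a non-zero contribution are the all-ones partition (yielding $m_n^{m'}(a_1,\dots,a_n)$), the two partitions $(n-1,1)$ and $(1,n-1)$ (each yielding one copy of $m_2^{m'}$ with $f_{n-1}$ plugged into one slot), and the singleton partition $(n)$ — but the latter is killed by $m_1^{m'}\circ f_n=0$ by minimality of $m'$. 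Symmetrically, on the right-hand side the only pairs $(s,r)$ with $s+r=n+1$ and $f_s\neq 0$ are $s=1$ (forcing $r=n$, giving $f_1(m_n^m(a_1,\dots,a_n))=m_n^m(a_1,\dots,a_n)$) and $s=n-1$ (forcing $r=2$, giving $\sum_j\pm f_{n-1}(a_1,\dots,m_2^m(a_j,a_{j+1}),\dots,a_n)$); the remaining case $s=n, r=1$ vanishes because $m_1^m=0$.

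Using $m_2^{m'}=m_2^m$ (this is the fixed composition in $\cC$), one then rearranges all $f_{n-1}$-dependent terms to one side and recognizes their signed sum as precisely $\partial(f_{n-1})(a_1,\dots,a_n)$, where $\partial$ is the Hochschild differential defined in Subsection \ref{Hoch}. The order-$n$ equation thereby collapses to the desired identity $m_n^{m'}=m_n^m+\partial(f_{n-1})$. The only delicate step is sign bookkeeping — checking that the signs $\gamma_i$ and $\epsilon_j$ appearing in the $A_\infty$-morphism relation from Section \ref{Preliminaries} recombine into exactly the signs of the Hochschild differential formula of Subsection \ref{Hoch}. This is a routine direct comparison that I expect to be the main annoyance but not a substantive obstacle, which is presumably why the author omits it.
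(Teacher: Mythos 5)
Your proposal is correct: the paper omits the proof of this lemma entirely (declaring it ``well-known for $A_{\infty}$-algebras'' and ``straightforward''), and your unpacking of the order-$n$ component of the $A_{\infty}$-morphism relation --- isolating the all-ones partition, the two $(n-1,1)$-type partitions, the $s=1$ and $s=n-1$ terms, and killing the rest via $f_k=0$ for $2\leq k\leq n-2$ and minimality $m_1=m_1'=0$ --- is exactly the intended verification, with the surviving $f_{n-1}$-terms assembling into the Hochschild coboundary $\partial(f_{n-1})$. The only caveat is the sign/ordering bookkeeping you already flag (note the paper writes the Hochschild differential with arguments in reversed order), which is routine and not a substantive gap.
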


\begin{lemma}\label{obstr_f_n}Let $m$ and $m'$ be two $A_{\infty}$-structures on a graded pre-category $\cC.$ Suppose
that $(f_1=\id,f_2,\dots,f_{n-1})$ is a partially defined strong
homotopy between $m$ and $m'.$ i.e. the maps $f_{\leq n-1}$
satisfy all the required equations which do not contain $f_{\geq
n}.$ Write the first $A_{\infty}\text{-}$constraint containing
$f_n$ in the form
\begin{equation}\partial(f_n)=\Psi,\end{equation}
where $\partial$ is the Hochshild differential and
$\Psi=\Psi(f_2,\dots,f_{n-1};m,m')$ is a polynomial expression.
Then we always have $\partial(\Psi)=0.$
\end{lemma}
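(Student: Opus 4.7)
The statement is the standard closedness of the obstruction for lifting an $A_{\infty}$-homotopy, and the plan is to deduce it by the same bar-construction manipulation that proves Lemma~\ref{obstr_m_n}. The only novel point for pre-categories is that every sub-sequence invoked in the computation must still be transversal, but this is ensured by the axiom that sub-sequences of transversal sequences are again transversal; nothing else in the argument is sensitive to the restriction to transversal sequences.

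First I would encode the data formally. To the $A_{\infty}$-structures $m$ and $m'$ on $\cC$ associate the codifferentials $b,b'$ of $T_{+}(\cC[1])$, defined arity-by-arity on transversal sequences; to $(f_1=\id,f_2,\dots,f_{n-1})$, extended by an \emph{arbitrary} map $f_n$ of degree $1-n$, associate the coalgebra morphism $F\colon T_{+}(\cC[1])\to T_{+}(\cC[1])$ of the same shape. Set
\begin{equation*}
\alpha \;:=\; b'F - Fb,
\end{equation*}
which is a coderivation of $T_{+}(\cC[1])$ along $F$, and is therefore determined by its Taylor coefficients $\alpha_k\colon\bigotimes_{1\leq i\leq k}\Hom(X_{i-1},X_i)\to\Hom(X_0,X_k)[2{-}k]$. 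Tautologically, $\alpha_k$ is the left-minus-right side of the $k$-th $A_{\infty}$-morphism equation, so the hypothesis that $f_{\leq n-1}$ satisfy all equations not involving $f_{\geq n}$ translates, in the present minimal setting (where $m_1=m_1'=0$), into $\alpha_k=0$ for $k\leq n$; a direct inspection then shows $\alpha_{n+1}=\partial(f_n)-\Psi$.

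Next I would invoke the universal identity
\begin{equation*}
b'\alpha + \alpha b \;=\; (b')^2 F - b'Fb + b'Fb - Fb^2 \;=\; 0,
\end{equation*}
which depends only on $b^2=(b')^2=0$. Projecting to $\Hom(X_0,X_{n+2})$ on a transversal sequence $(X_0,\dots,X_{n+2})\in\cC_{tr}^{n+3}$ and expanding, each term is either an $m'$-composition of some $f$'s with one $\alpha_j$, or an $\alpha_j$ evaluated on a string containing one $m_r$-composition. Minimality kills all contributions involving $m_1,m_1'$, and the vanishing $\alpha_k=0$ for $k\leq n$ kills all contributions except those involving $\alpha_{n+1}$. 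What survives assembles into
\begin{equation*}
m_2'\bigl(a_1,\alpha_{n+1}(a_2,\dots,a_{n+2})\bigr)\;\pm\;m_2'\bigl(\alpha_{n+1}(a_1,\dots,a_{n+1}),a_{n+2}\bigr)\;\pm\;\sum_{j}\alpha_{n+1}\bigl(a_1,\dots,m_2(a_j,a_{j+1}),\dots,a_{n+2}\bigr),
\end{equation*}
which is exactly $\partial(\alpha_{n+1})$. Substituting $\alpha_{n+1}=\partial(f_n)-\Psi$ and using $\partial^2=0$ yields $\partial(\Psi)=0$.

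The only laborious point is bookkeeping of signs, but these are fixed once and for all by the sign convention on $T_{+}(\cC[1])$ and the identity $b'\alpha+\alpha b=0$ is formal, so I do not expect any conceptual difficulty there. The sole genuinely pre-categorical ingredient is that every sub-string of a transversal $(X_0,\dots,X_{n+2})$ is itself transversal, guaranteeing that each term in the expansion above is a priori defined inside $\cC$; this is built into the very definition of a non-unital $A_{\infty}$-pre-category, and so the proof runs without further modification relative to the classical case.
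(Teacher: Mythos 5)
Your argument is correct and is exactly the standard coderivation computation ($\alpha=b'F-Fb$, then $b'\alpha+\alpha b=0$ projected onto arity $n+2$, with minimality and $\alpha_{\le n}=0$ leaving only $\partial(\alpha_{n+1})=0$) that the paper has in mind when it declares this lemma ``well-known for $A_{\infty}$-algebras'' and omits the proof. Your added remark that transversality of all sub-sequences of a transversal sequence is what makes every term in the expansion defined is precisely the only pre-categorical point that needs checking, so nothing is missing.
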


We will also need the notion of homotopy between two
$A_{\infty}\text{-}$functors. First, let $f,f':A\to B$ be two
$A_{\infty}\text{-}$morphisms of (possibly non-unital)
$A_{\infty}\text{-}$algebras. We have the associated morphisms of
DG coalgebras $f,f':T_{+}(A[1])\to T_+(B[1]).$ A homotopy between
$f$ and $f'$ is a map $H:T_+(A[1])\to T_+(B[1])$ satisfying the
identities
\begin{equation}\label{homotopy}\Delta\circ H=(f\otimes H+H\otimes f')\circ \Delta,\end{equation}
and \begin{equation}f-f'=b_B\circ H+H\circ b_A.\end{equation}

Any map $H$ satisfying \eqref{homotopy} is uniquely determined by
its components $h_n:A^{\otimes n}\to B,$ $\deg(h_n)=-n.$

\begin{defi}Let $F,F':\cC\to\cD$ be $A_{\infty}\text{-}$functors between $A_{\infty}\text{-}$pre-categories,
such that $F(X)=F'(X)$ for each $X\in Ob(\cC).$ A homotopy $H$
between $f$ and $f'$ is a collection of maps
$h_n:\bigotimes\limits_{1\leq i\leq n}\Hom_{\cC}(X_{i-1},X_i)\to
\Hom_{\cD}(F(X_0),F(X_n))$ of degree $-n,$ for all transversal sequences
$(X_0,\dots,X_n),$ satisfying the following property. For each
transversal sequence $(X_0,\dots,X_N)\in\cC_{tr}^{n+1},$ we have
that the maps $h_n$ define a homotopy between the restricted
$A_{\infty}\text{-}$functors
\begin{equation}F,F':\bigoplus\limits_{i<j}\Hom_{\cC}(X_i,X_j)\to F,F':\bigoplus\limits_{i<j}\Hom_{\cD}(F(X_i),F(X_j)).
\end{equation}\end{defi}

We will need the following Lemma.

\begin{lemma}\label{perturb_f_n}Let $F:\cC\to\cD$ be an $A_{\infty}\text{-}$functor. Suppose that
we are given with a collection of maps
$h_n:\bigotimes\limits_{1\leq i\leq n}\Hom_{\cC}(X_{i-1},X_i)\to
\Hom_{\cD}(F(X_0),F(X_n))$ for all transversal sequences. Then
there exists a unique $A_{\infty}\text{-}$functor $F':\cC\to\cD,$
$F'(X)=F(X),$ such that the sequence $h_n$ defines a homotopy
between $F$ and $F'.$

Moreover, in the case when $\cC=(\cE,m),$ $\cD=(\cE,m')$ are
$A_{\infty}\text{-}$structures on the same graded pre-category
$\cE,$ $F$ belongs to $G_{\cE},$ and $h_i=0$ for $1\leq i\leq
n-2,$ we have that
\begin{equation}\begin{cases}f_i'=f_i & \text{for }1\leq i\leq
n-1,\\
f_n'=f_n+\partial(h_{n-1})\end{cases}\end{equation}
\end{lemma}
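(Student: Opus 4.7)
The plan is to read off the Taylor coefficients of $F'$ directly from the two defining equations of a homotopy. The first equation $\Delta\circ H = (f\otimes H+H\otimes f')\circ \Delta$ says that $H$ is an $(f,f')$-coderivation and is therefore uniquely reconstructed from its components $h_n$ once $f$ and $f'$ are in hand; the second equation $f-f'=b_\cD\circ H+H\circ b_\cC$ then gives, in each Taylor degree,
\[
f_n' \;=\; f_n \;-\; (b_\cD\circ H+H\circ b_\cC)_n,
\]
whose right-hand side, once expanded by the Leibniz rule, involves $h_n$ only through $m_1^\cD\circ h_n$ and otherwise only $h_j, f_j, m_k^\cC, m_k^\cD$ with $j<n$. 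This determines $f'_n$ inductively in $n$, proving both existence and uniqueness. Since every sum on the right ranges only over transversal sequences on which the input data $f_j, h_j, m_k^\cC, m_k^\cD$ are already defined, no new transversality extensions are needed, and the construction takes place entirely in the pre-category setting.

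Next I would verify that this $F'$ is an actual $A_\infty$-functor, i.e.\ that $f'$ commutes with the coalgebra differentials $b_\cC, b_\cD$. Applying $b_\cD$ on the left and $b_\cC$ on the right to the homotopy identity $f-f'=b_\cD H+H b_\cC$ and using $b^2=0$ on both sides, one gets $b_\cD(f-f')=b_\cD H b_\cC$ and $(f-f')b_\cC=b_\cD H b_\cC$, whence $b_\cD f'-f' b_\cC=b_\cD f-f b_\cC=0$ because $F$ is already $A_\infty$. The coalgebra property $\Delta\circ f'=(f'\otimes f')\circ \Delta$ follows from the coderivation property of $H$ together with the explicit component definition; this is the standard check in the $A_\infty$-algebra case, and the pre-category setting adds no difficulty because no sums extend outside transversal sequences.

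For the ``moreover'' clause I would specialize to $\cC=(\cE,m)$, $\cD=(\cE,m')$ with $f_1=\id$ and $h_i=0$ for $1\le i\le n-2$, and substitute into the explicit formula above. In each Taylor degree $i\le n-1$ every summand of $(b_\cD H+H b_\cC)_i$ contains either some $h_{\le n-2}=0$ or the term $m_1^\cD\circ h_i$, which vanishes by minimality; hence $f'_i=f_i$ for $i\le n-1$. In Taylor degree $n$ the only surviving contributions are those involving $h_{n-1}$, and after using $f_1=\id$ together with $m_2=m_2'=(\cdot)$ these terms collapse, with the sign conventions of \eqref{eq_for_m_n}, to precisely the Hochshild coboundary $\partial(h_{n-1})$ defined in Subsection \ref{Hoch}. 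Thus $f'_n=f_n+\partial(h_{n-1})$.

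The main obstacle I expect is the sign bookkeeping in the final identification of $(b_\cD H+H b_\cC)_n$ with $\partial(h_{n-1})$: the expansion produces many terms of the shape $m_2'(f_1(\cdot),h_{n-1}(\cdot))$, $m_2'(h_{n-1}(\cdot),f_1(\cdot))$, and $h_{n-1}(\ldots,m_2(\cdot,\cdot),\ldots)$ with signs dictated by the Koszul rule, and one must see that they assemble exactly into the Hochshild differential applied to $h_{n-1}\in CC^{n-1,1-n}(\cC)$. The coalgebra-morphism verification for $f'$ is a standard formal consequence of $b^2=0$, and the transition from the $A_\infty$-algebra setting to the pre-category setting introduces no essential complication.
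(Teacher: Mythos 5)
Your overall strategy --- define $f'_n$ inductively from the Taylor components of $f-f'=b_\cD\circ H+H\circ b_\cC$, then check the $A_\infty$-functor equations, then specialize to obtain the ``moreover'' clause --- is the right one, and it is essentially the content of the reference the paper leans on (the paper's own proof is just a citation of \cite{P}, Lemma 2.1 for the first assertion and the phrase ``checked straightforwardly'' for the second). Your final two paragraphs (vanishing of the correction in Taylor degrees $\le n-1$ because $h_{\le n-2}=0$ and $m_1=m_1'=0$, and the identification of the degree-$n$ correction with $\partial(h_{n-1})$ up to sign conventions) are correct.

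There is, however, a genuine circularity in your verification that $F'$ is an $A_\infty$-functor. You apply $b_\cD$ and $b_\cC$ to $f-f'=b_\cD H+Hb_\cC$ \emph{as an identity of maps} $T_+(A[1])\to T_+(B[1])$, but your construction only establishes the corestrictions of this identity to $B[1]$ (the Taylor components). To promote the componentwise identity to the coalgebra-level one you would need both sides to be $(f,f')$-coderivations. The left side $f-f'$ always is, but a direct computation gives
\begin{equation*}
\Delta\circ(b_\cD H+Hb_\cC)=\bigl(f\otimes(b_\cD H+Hb_\cC)+(b_\cD H+Hb_\cC)\otimes f'\bigr)\circ\Delta\ \pm\ \bigl(H\otimes(b_\cD f'-f'b_\cC)\bigr)\circ\Delta ,
\end{equation*}
so the right side is an $(f,f')$-coderivation exactly when $b_\cD f'=f'b_\cC$ --- which is the statement you are trying to prove. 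The repair is a simultaneous induction on the Taylor degree: set $D=f-f'-b_\cD H-Hb_\cC$ and $E=b_\cD f'-f'b_\cC$, and note $E=Db_\cC-b_\cD D$. Assuming $D$ and $E$ vanish on $A^{\otimes k}$ for $k<n$, the $B[1]$-component of $D$ on $A^{\otimes n}$ vanishes by the definition of $f'_n$, its higher components vanish by the displayed formula (they are built from $D$ and $E$ in lower degrees), hence $D|_{A^{\otimes n}}=0$; then the $B[1]$-component of $E$ on $A^{\otimes n}$ vanishes via $E=Db_\cC-b_\cD D$, and $E|_{A^{\otimes n}}=0$ because $E$ is an $(f',f')$-coderivation (which holds for any coalgebra morphism $f'$). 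Only after this induction are both the coalgebra-level homotopy identity and the ``formal consequence of $b^2=0$'' available. One further small slip: in your inductive formula the right-hand side also involves the already-constructed $f'_j$ with $j<n$ (through the $(f,f')$-coderivation extension of $H$) and the term $h_n\circ(\sum\pm\,\mathrm{id}^{\otimes i}\otimes m_1^{\cC}\otimes \mathrm{id}^{\otimes(n-i-1)})$, not only $m_1^{\cD}\circ h_n$; neither affects the induction, since neither involves $f'_n$.
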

\begin{proof} The first statement is a direct consequence of \cite{P}, Lemma 2.1.
The second one is checked straightforwardly.\end{proof}

\subsection{Invariance Theorem}
\label{inv_Theo}

 Let $\phi:\cC\to\cD$ be an equivalence of graded pre-categories.
We have a natural map $\phi^*:A_{\infty}(\cD)\to A_{\infty}(\cC),$
and a homomorphism $\phi^*:G_{\cD}\to G_{\cC},$ compatible with
our group actions. Therefore we have a map of strong homotopy
equivalence classes of $A_{\infty}\text{-}$structures:
\begin{equation}\label{transfer}\phi^*:A_{\infty}(\cD)/G_{\cD}\to A_{\infty}(\cC)/G_{\cC}.\end{equation}

\begin{theo}\label{invar_theo}The map \eqref{transfer} is a bijection.\end{theo}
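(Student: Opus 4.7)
The plan is to prove Theorem~\ref{invar_theo} by simultaneous induction on $n$, using the obstruction calculus of Lemmas~\ref{obstr_m_n}--\ref{perturb_f_n} and invoking Main Lemma~\ref{iso_on_Hoch} as the only nonformal input. The surjectivity of $\phi^*$ builds a preimage on $\cD$ together with a compatibility homotopy on $\cC$; the injectivity builds a strong homotopy on $\cD$ out of one given on $\cC$. In both cases the obstructions are Hochschild cocycles, and Main Lemma lets us compare them across $\phi$.

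\textbf{Surjectivity.} Given $m\in A_\infty(\cC)$, I inductively construct $\tilde m\in A_\infty(\cD)$ and a strong homotopy $F\colon(\cC,m)\to(\cC,\phi^*\tilde m)$. Assume $(\tilde m_3,\dots,\tilde m_{n-1})$ and $(f_2,\dots,f_{n-2})$ have been chosen so that all defining equations hold up to order $n-1$. By Lemma~\ref{obstr_m_n}, the obstruction to defining $\tilde m_n$ is a Hochschild cocycle $\tilde\Phi_n\in CC^{n+1,3-n}(\cD)$. Naturality of the construction, combined with the partial homotopy $f_{\leq n-2}$, shows that $\phi^*(\tilde\Phi_n)$ is cohomologous in $CC^{\cdot,\cdot}(\cC)$ to the analogous obstruction for $m$, which vanishes because $m$ is a genuine $A_\infty$-structure; hence $[\phi^*\tilde\Phi_n]=0$. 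By Main Lemma~\ref{iso_on_Hoch}, $[\tilde\Phi_n]=0$, and we pick $\tilde m_n$ solving $\partial\tilde m_n=\tilde\Phi_n$. The obstruction to extending $F$ by one component $f_{n-1}$ is then a cocycle in $CC^{\cdot,\cdot}(\cC)$ by Lemma~\ref{obstr_f_n}; modifying $\tilde m_n$ by a Hochschild cocycle (transferable via Main Lemma) and applying Lemma~\ref{perturb_m_n} to change the induced structure by the corresponding coboundary makes this obstruction exact, so $f_{n-1}$ exists.

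\textbf{Injectivity.} Suppose $\tilde m,\tilde m'\in A_\infty(\cD)$ and $F\in G_\cC$ satisfies $F(\phi^*\tilde m)=\phi^*\tilde m'$. I construct $\tilde F\in G_\cD$ with $\tilde F(\tilde m)=\tilde m'$ by induction on its components $\tilde f_n$. Having chosen $(\tilde f_2,\dots,\tilde f_{n-1})$, Lemma~\ref{obstr_f_n} presents the obstruction to $\tilde f_n$ as a cocycle $\tilde\Psi_n\in CC^{n,2-n}(\cD)$. Using Lemma~\ref{perturb_f_n} I replace $F$ within its homotopy class so that its first $n-1$ components equal $\phi^*\tilde f_{\leq n-1}$; then $\phi^*(\tilde\Psi_n)$ becomes the corresponding obstruction for $F$, which is a coboundary because $F$ actually extends to an $A_\infty$-morphism. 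Main Lemma~\ref{iso_on_Hoch} then gives $[\tilde\Psi_n]=0$ in $HH^{n,2-n}(\cD)$, producing the desired $\tilde f_n$.

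\textbf{The main obstacle} is purely bookkeeping: at every step one must carry along both a partial $A_\infty$-structure on $\cD$ \emph{and} a partial strong homotopy on $\cC$, and exploit the freedom in the perturbation Lemmas~\ref{perturb_m_n} and~\ref{perturb_f_n} to arrange that two obstructions (one in $\cD$ and one in $\cC$) are simultaneously killed. Once this is set up, the Hochschild cocycle identities of Lemmas~\ref{obstr_m_n} and~\ref{obstr_f_n}, together with the isomorphism $\phi^*\colon HH^{\cdot,\cdot}(\cD)\xrightarrow{\sim}HH^{\cdot,\cdot}(\cC)$ of Main Lemma, reduce every vanishing step to formal transfer of classes across $\phi$.
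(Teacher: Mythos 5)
Your proposal is correct and follows the paper's proof essentially verbatim: both arguments induct on the order of the Taylor components, use Lemmas \ref{obstr_m_n}--\ref{perturb_f_n} to present the obstructions as Hochschild cocycles, and invoke Main Lemma \ref{iso_on_Hoch} twice at each stage --- injectivity of $\phi^*$ on $HH$ to kill the obstruction on $\cD$, surjectivity to adjust $\widetilde{m_n}$ (resp.\ $\widetilde{f_n}$) by a cocycle so that the comparison homotopy on $\cC$ extends by one more component. The only harmless discrepancies are the internal degrees of your obstruction cocycles ($\Phi_n$ lives in $CC^{n+1,2-n}$ and $\Psi_n$ in $CC^{n+1,1-n}$, since $\partial$ preserves the second grading) and that the paper arranges exact equality $m_i=\phi^*(\widetilde{m_i})$ at each step by replacing $m$, rather than keeping $m$ fixed and arguing that the obstruction classes are merely cohomologous under the partial homotopy.
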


\begin{proof}{\bf Surjectivity.} First we prove that our map is surjective. Take some $A_{\infty}\text{-}$structure
$m$ on $\cC.$ We want to prove that it is strongly homotopic to
some $A_{\infty}\text{-}$structure of the form
$\phi^*(\widetilde{m}),$ where $\widetilde{m}$ is an
$A_{\infty}\text{-}$structure on $\cD.$ Clearly, it suffices to
prove the following Lemma.
\begin{lemma} Let $m$ be an $A_{\infty}\text{-}$structure on $\cC$
such that $m_i=\phi^*(\widetilde{m_i})$ for $3\leq i\leq n-1,$
where $(\widetilde{m_3},\dots,\widetilde{m_{n-1}})$ is a partially
defined $A_{\infty}\text{-}$structure on $\cD.$ Then $m$ is
strongly homotopic to some $A_{\infty}\text{-}$structure $m'$ such
that $m_i'=m_i$ for $i\leq n-1,$ and
$m_n'=\phi^*(\widetilde{m_n}),$ so that
$(\widetilde{m_3},\dots,\widetilde{m_n})$ is a partially defined
$A_{\infty}\text{-}$structure on $\cD.$ Moreover, strong homotopy
$(f_1,f_2,\dots)$ between $m$ and $m'$ can be taken to be such
that $f_2=\dots=f_{n-2}=0.$
\end{lemma}
\begin{proof}Write the first
$A_{\infty}\text{-}$constraint containing $m_n$ in the form
\begin{equation}\partial(m_n)=\Phi,\end{equation}
as in Lemma \ref{obstr_m_n}. We have that
$\Phi(m_3,\dots,m_{n-1})$ is a Hochshild coboundary. By Lemma
\ref{obstr_m_n}, we have that
$\Phi(\widetilde{m_3},\dots,\widetilde{m_{n-1}})$ is a Hochchild
cocycle. Further, we have that
\begin{equation}\Phi(m_3,\dots,m_{n-1})=\phi^*\Phi(\widetilde{m_3},\dots,\widetilde{m_{n-1}}).\end{equation}
Therefore, Lemma \ref{iso_on_Hoch} implies that
$\Phi(\widetilde{m_3},\dots,\widetilde{m_{n-1}})$ is a Hochshild
coboundary. Take some $\widetilde{m_n}\in CC^{n,2-n}(\cD)$ such
that
$\partial(\widetilde{m_n})=\Phi(\widetilde{m_3},\dots,\widetilde{m_{n-1}}).$

We have that $\phi^*(\widetilde{m_n})-m_n$ is a Hochshild cocycle.
Again by Lemma \ref{iso_on_Hoch}, we can choose $\widetilde{m_n}$
in such a way that this difference is a Hochshild coboundary. Take
some element $f\in G_{\cC}$ such that $f_2=\dots=f_{n-2}=0,$ and
$\partial(f_{n-1})=\phi^*(\widetilde{m_n})-m_n.$ Put $m'=f(m).$ By
Lemma \ref{perturb_m_n}, we have that
$m_i'=\phi^*(\widetilde{m_i})$ for $i\leq n.$ This proves
Lemma.\end{proof} Surjectivity is proved.

{\noindent}{\bf Injectivity.} We are left to prove that our map is
injective. Let $m,m'$ be $A_{\infty}\text{-}$structures on $\cD,$
and let $F:(\cC,\phi^*(m))\to (\cC,\phi^*(m'))$ be a strong
homotopy. We need to prove the existence of a strong homotopy
between $m$ and $m'.$ Clearly, it suffices to prove the following
Lemma.
\begin{lemma}Let $m,m'$ be $A_{\infty}\text{-}$structures on $\cC.$ Let $f$ be a strong
homotopy between $\phi^*(m)$ and $\phi^*(m').$ Suppose that
$f_i=\phi^*(\widetilde{f_i})$ for $2\leq i\leq n-1,$ where
$(\widetilde{f_2},\dots,\widetilde{f_{n-1}})$ is a partially
defined strong homotopy between $m$ and $m'.$ Then there exists
some strong homotopy $f'$ between $\phi^*(m)$ and $\phi^*(m')$
such that $f_i'=f_i$ for $i\leq n-1,$ and
$f_n'=\phi^*(\widetilde{f_n}),$ so that
$(\widetilde{f_2},\dots,\widetilde{f_n})$ is a partially defined
strong homotopy between $m$ and $m'.$\end{lemma}

\begin{proof}Write the first
$A_{\infty}\text{-}$constraint containing $f_n$ in the form
\begin{equation}\partial(f_n)=\Psi,\end{equation}
as in Lemma \ref{obstr_f_n}. We have that
$\Psi(f_2,\dots,f_{n-1};\phi^*(m),\phi^*(m'))$ is a Hochshild
coboundary. By Lemma \ref{obstr_f_n}, we have that
$\Psi(\widetilde{f_2},\dots,\widetilde{f_{n-1}};m,m')$ is a
Hochchild cocycle. Further, we have that
\begin{equation}\Psi(f_2,\dots,f_{n-1},\phi^*(m),\phi^*(m'))=\phi^*\Psi(\widetilde{f_2},\dots,\widetilde{f_{n-1}};m,m').\end{equation}
Therefore, Lemma \ref{iso_on_Hoch} implies that
$\Psi(\widetilde{f_2},\dots,\widetilde{f_{n-1}},m,m')$ is a
Hochshild coboundary. Take some $\widetilde{f_n}\in
CC^{n,1-n}(\cD)$ such that
$\partial(\widetilde{f_n})=\Psi(\widetilde{f_2},\dots,\widetilde{f_{n-1}};m,m').$

We have that $\phi^*(\widetilde{f_n})-f_n$ is a Hochshild cocycle.
Again by Lemma \ref{iso_on_Hoch}, we can choose $\widetilde{f_n}$
in such a way that this difference is a Hochshild coboundary. Take
some sequence of elements $h_n\in CC^{n,-n},$ $n\geq 1,$ such that
$h_2=\dots=h_{n-2}=0,$ and
$\partial(h_{n-1})=\phi^*(\widetilde{f_n})-f_n.$ By Lemma
\ref{perturb_f_n}, there exists a unique strong homotopy $f',$
such that the sequence $h_n$ defines a homotopy between $f$ and
$f'.$ Again by Lemma \ref{perturb_f_n}, we have that
$f_i'=\phi^*(\widetilde{f_i})$ for $i\leq n.$ This proves
Lemma.\end{proof} Injectivity is proved.
\end{proof}

\subsection{Proof of Main Theorem}
\label{proof}

Before we prove Main Theorem, we need one more lemma.

\begin{lemma}\label{tautology}Let $\phi:\cC\to \cD$ be an equivalence of graded pre-categories. Let $m,m'$ be some
$A_{\infty}\text{-}$structures on $\cC$ and $\cD$ respectively.
The following are equivalent:

(i) The functor $\phi$ can be extended to an
$A_{\infty}\text{-}$functor $\Phi:(\cC,m)\to (\cD,m');$

(ii) The $A_{\infty}\text{-}$structures $m$ and
$\phi^*(m'),$ are strongly homotopic.
\end{lemma}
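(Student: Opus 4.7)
The plan is to introduce the tautological $A_{\infty}\text{-}$functor $\Psi:(\cC,\phi^*(m'))\to(\cD,m')$ whose only nonzero Taylor coefficient is $\Psi_1=\phi_1$, and to realize the equivalence $(i)\Leftrightarrow(ii)$ as pre-/post-composition with $\Psi$. The pullback $\phi^*(m')$ itself is defined by transport of structure: for a transversal sequence $(X_0,\dots,X_n)\in\cC_{tr}^{n+1}$ and $a_i\in\Hom_\cC(X_{i-1},X_i)$,
\begin{equation}
\phi^*(m')_n(a_1,\dots,a_n)=\phi_1^{-1}\bigl(m'_n(\phi_1(a_1),\dots,\phi_1(a_n))\bigr),
\end{equation}
which makes sense because in a graded (hence minimal) pre-category a quasi-isomorphism is already an isomorphism, so $\phi_1$ is invertible on the $\Hom$ space of every transversal pair $(X_i,X_j)$, $i<j$ (including $(X_0,X_n)$, which is transversal as a subsequence of $(X_0,\dots,X_n)$). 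By construction $\Psi$ then satisfies the $A_{\infty}\text{-}$functor equations, since the composition formula with $\Psi_i=0$ for $i\geq 2$ collapses to $\Psi_1 m_n^{\phi^*(m')}(a_1,\dots,a_n)=m_n^{m'}(\Psi_1 a_1,\dots,\Psi_1 a_n)$.

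For $(ii)\Rightarrow(i)$: given a strong homotopy $F:(\cC,m)\to(\cC,\phi^*(m'))$, set $\Phi:=\Psi\circ F$. Because $\Psi_i=0$ for $i\geq 2$, the composition formula collapses and yields $\Phi_n=\phi_1\circ F_n$ for all $n$. In particular $\Phi_1=\phi_1\circ\id=\phi_1$ and $\Phi(X)=\phi(X)$ on objects, so $\Phi$ extends $\phi$.

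For $(i)\Rightarrow(ii)$: given $\Phi:(\cC,m)\to(\cD,m')$ with $\Phi_1=\phi_1$ and $\Phi(X)=\phi(X)$, define $F_n:=\phi_1^{-1}\circ\Phi_n$ on every transversal sequence. Then $F_1=\id$, and applying $\phi_1^{-1}$ term-by-term to the $A_{\infty}\text{-}$morphism identity for $\Phi$ converts the left-hand sum
\begin{equation}
\sum (-1)^{\gamma_i} m_i^{m'}\!\bigl(\Phi_{l_1}(\cdots),\dots,\Phi_{n-l_{i-1}}(\cdots)\bigr)
\end{equation}
into $\sum(-1)^{\gamma_i}\phi^*(m')_i(F_{l_1},\dots,F_{n-l_{i-1}})$ via the identity $\phi_1^{-1}m'_i(\phi_1\otimes\cdots\otimes\phi_1)=\phi^*(m')_i$, while the right-hand sum becomes $\sum(-1)^{\epsilon_j}F_s(\dots,m_r(\dots),\dots)$. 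Hence $F$ is an $A_{\infty}\text{-}$morphism $(\cC,m)\to(\cC,\phi^*(m'))$ with $F_1=\id$, i.e.\ a strong homotopy.

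The main (and essentially only) obstacle to watch is the invertibility of $\phi_1$ on the right $\Hom$ spaces; it is this point that uses both that we are working in the \emph{graded} (minimal) setting and that every subpair of a transversal sequence is transversal, so that $\phi^*(m')$ and the factorization $\Phi=\Psi\circ F$ actually make sense. Everything else is a formal manipulation of the $A_{\infty}$ composition formulas using $\Psi_{\geq 2}=0$.
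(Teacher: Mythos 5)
Your proof is correct and is essentially the argument the paper has in mind (the paper's own proof is literally the single word ``Evident''): $\phi^*(m')$ is transport of structure along $\phi_1$, which is invertible on the $\Hom$ of every transversal pair because quasi-isomorphisms of complexes with zero differential are isomorphisms, and then (i)$\Leftrightarrow$(ii) is pre-/post-composition with the strict functor $\Psi$ having $\Psi_1=\phi_1$, $\Psi_{\geq 2}=0$. You correctly isolate the one non-formal point (invertibility of $\phi_1$ in the minimal/graded setting), so nothing is missing.
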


\begin{proof}Evident.\end{proof}

\begin{proof}[Proof of Theorem \ref{main_theo}.] By Lemmas
\ref{ess.small-small} and \ref{red_to_min}, it suffices to prove
that quasi-equivalence classes of small minimal
$A_{\infty}\text{-}$categories are in bijection with
quasi-equivalence classes of small minimal
$A_{\infty}\text{-}$pre-categories.

Given a minimal $A_{\infty}\text{-}$category $\cC,$ it can also be
considered as an $A_{\infty}\text{-}$pre-category with
$\cC_{tr}^n=Ob(\cC)^n.$ Clearly, if $\cC$ and $\cD$ are
quasi-equivalent minimal $A_{\infty}\text{-}$categories, then the
associated minimal $A_{\infty}\text{-}$pre-categories are
quasi-equivalent.

Now, let $\cC$ be a minimal $A_{\infty}\text{-}$pre-category.
Denote by $m$ the $A_{\infty}\text{-}$structure on $\cC^{gr}$
corresponding to $\cC.$

We have an equivalence of graded pre-categories
$\iota_{\cC^{gr}}:\cC^{gr}\to \cC^{gr}_{full}.$ By Theorem
\ref{invar_theo}, there exists an $A_{\infty}\text{-}$structure
$\widetilde{m}$ on $\cC^{gr}_{full},$ such that the
$A_{\infty}\text{-}$structure $\iota_{\cC^{gr}}^*(\widetilde{m})$
is strongly homotopic to $m.$ By lemma \ref{tautology}, the functor
$\iota_{\cC^{gr}}$ can be extended to the quasi-equivalence
$\cC\to (\cC^{gr}_{full},m).$ Hence, starting from a minimal
$A_{\infty}\text{-}$pre-category, we constructed some minimal
$A_{\infty}\text{-}$category $\widetilde{\cC},$ together with a
quasi-equivalence $\cC\to \widetilde{\cC}.$ We are left to prove
that, starting from quasi-equivalent
$A_{\infty}\text{-}$pre-categories, we obtain quasi-equivalent
$A_{\infty}\text{-}$categories.

Let $F:\cC\to\cD$ be a quasi-equivalence of minimal
$A_{\infty}\text{-}$pre-categories. The functor of graded
categories $F_1:\cC^{gr}\to \cD^{gr}$ can be obviously extended to
a functor $\Phi_1:\cC^{gr}_{full}\to \cD^{gr}_{full},$ so that we
have a commutative square of functors:

\begin{equation}\label{comm_funct}
\begin{CD}
\cC^{gr} @>F_1>> \cD^{gr}\\
@V\iota_{\cC^{gr}} VV                              @V\iota_{\cD^{gr}} VV\\
\cC^{gr}_{full} @>\Phi_1 >> \cD^{gr}_{full}.
\end{CD}
\end{equation}

Denote by $m$ (resp. $m'$) the $A_{\infty}\text{-}$structure on
$\cC^{gr}$ (resp. on $\cD^{gr}$) corresponding to $\cC$ (resp. to
$\cD$). Further, denote by $\widetilde{m}$ (resp.
$\widetilde{m'}$) the $A_{\infty}\text{-}$structure on
$\cC^{gr}_{full}$ (resp. on $\cD^{gr}_{full}$) such that
$\iota_{\cC^{gr}}^*(\widetilde{m})$ is strongly homotopic to $m$
(resp. $\iota_{\cD^{gr}}^*(\widetilde{m'})$ is strongly homotopic
to $m'$). By Lemma \ref{tautology}, $A_{\infty}\text{-}$structures
$F_1^*(m')$ and $m$ are also strongly homotopic. Hence, from the
commutative square \ref{comm_funct} and Theorem \ref{invar_theo},
we obtain that $A_{\infty}\text{-}$structures
$\Phi_1^*(\widetilde{m'})$ and $\widetilde{m}$ are strongly
homotopic. Therefore, by Lemma \ref{tautology}, the functor
$\Phi_1$ can be extended to a quasi-equivalence
\begin{equation}\Phi:(\cC^{gr}_{full},\widetilde{m})\to \cD^{gr}_{full},\widetilde{m'}).\end{equation}
Thus, starting from quasi-equivalent minimal
$A_{\infty}\text{-}$pre-categories, we obtain quasi-equivalent
minimal $A_{\infty}\text{-}$categories. Theorem is proved.
\end{proof}

\section{Twisted complexes over $A_{\infty}\text{-}$pre-categories}
\label{twisted}

It is clear that Main Theorem implies that we can take
pre-triangulated envelope and perfect derived category of any
essentially small $A_{\infty}\text{-}$pre-category, by replacing
it with some quasi-equivalent $A_{\infty}\text{-}$category.
However, it is useful in practice to have a construction of
pre-triangulated envelope in the framework of
$A_{\infty}\text{-}$pre-categories. We present such a construction
in this section. It is in fact straightforward generalization of
twisted complexes over ordinary $A_{\infty}\text{-}$categories
\cite{K}.

We work here over arbitrary graded commutative ring $k.$

Let $\cC$ be an $A_{\infty}\text{-}$pre-category. Define the
$A_{\infty}\text{-}$pre-category $\widetilde{\cC}$ as follows:

1) $Ob(\widetilde{\cC})=\{X[n]\mid X\in Ob(\cC), n\in\Z\};$

2)
$(X_1[n_1],\dots,X_k[n_k])\in\widetilde{\cC}_{tr}^k\Leftrightarrow
(X_1,\dots,X_k)\in \cC_{tr}^k.$

3) For a transversal pair $(X_1[n_1],X_2[n_2]),$ we put
\begin{equation}\Hom(X_1[n_1],X_2[n_2]):=\Hom(X_1,X_2)[n_2-n_1].\end{equation}
The higher products equal to that in $\cC.$

\smallskip

Now, we define the twisted complexes. Given a transversal sequence
$S=(X_1,\dots,X_n)\in \cC_{tr}^n,$ we put
\begin{equation}\End_+(S):=\bigoplus_{1\leq i<j\leq n}\Hom(X_i,X_j).\end{equation}

\begin{defi}A non-unital $A_{\infty}\text{-}$algebra $A$ is called nilpotent if it is equipped with finite
decreasing filtration $A=F_1A\supset F_2A\supset\dots F_nA=0,$
such that
\begin{equation}m_k(F_{r_1}A\otimes\dots\otimes F_{r_k}A)\subset F_{r_1+\dots +r_k}A.\end{equation}\end{defi}

Clearly, $\End_+(S)$ is a nilpotent $A_{\infty}\text{-}$algebra,
with filtration $F_{\cdot}\End_+(S),$ where
\begin{equation}F_r\End_+(S)=\bigoplus\limits_{1\leq i\leq j-r\leq n-r}\Hom(X_i,X_j),\quad r\geq 1.\end{equation}

Just as in \cite{ELO2}, for each nilpotent
$A_{\infty}\text{-}$algebra $A,$ we have a groupoid $\cM\cC(A)$ of
Maurer-Cartan solutions in $A.$ The following Lemma is straightforward
generalization of \cite{ELO2}, Theorem 7.2.

\begin{lemma}\label{bijection}Let $f:A_1\to A_2$ be a filtered $A_{\infty}\text{-}$quasi-isomorphism of nilpotent
$A_{\infty}\text{-}$algebras. Then the induced functor
\begin{equation}f^*:\cM\cC(A_1)\to \cM\cC(A_2)\end{equation} is an
equivalence.\end{lemma}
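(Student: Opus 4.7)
The plan is to induct on the length $n$ of the filtration $A = F_1 A \supset \cdots \supset F_n A = 0$, common to $A_1$ and $A_2$ (refining if necessary). The base case $n = 1$ is trivial. For the inductive step, I first observe that $F_{n-1}A_i$ is an $A_\infty$-ideal on which all higher products vanish: for $k \geq 2$ and $n \geq 2$ one has $m_k(F_{n-1}^{\otimes k}) \subset F_{k(n-1)}A_i = 0$, so $F_{n-1}A_i$ is just a complex under $m_1$. The quotients $\bar A_i := A_i / F_{n-1}A_i$ carry induced nilpotent filtered $A_\infty$-structures of shorter length; $f$ descends to a filtered quasi-isomorphism $\bar f \colon \bar A_1 \to \bar A_2$, and by induction $\bar f^* \colon \cM\cC(\bar A_1) \to \cM\cC(\bar A_2)$ is an equivalence.

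For essential surjectivity, given $\mu \in \cM\cC(A_2)$, I would lift its reduction $\bar\mu$ (up to gauge) to some $\bar\nu \in \cM\cC(\bar A_1)$ by the inductive hypothesis, then pick any set-theoretic lift $\nu_0 \in A_1$. The Maurer--Cartan defect $\Phi_0 := \sum_{k \geq 1} m_k(\nu_0,\dots,\nu_0)$ lies in $F_{n-1}A_1$ and is $m_1$-closed by the standard Bianchi-type identity (which inside $F_{n-1}$ reduces to linear algebra because $m_{\geq 2}$ vanish there). Its class $[\Phi_0] \in H^*(F_{n-1}A_1, m_1)$ pushes forward under $f_*$ to the analogous obstruction class for an $f$-image lift of $\mu$, which vanishes because $\mu$ solves MC. Since $f$ restricts to a quasi-isomorphism $F_{n-1}A_1 \to F_{n-1}A_2$, one concludes $[\Phi_0] = 0$, and correcting $\nu_0$ by a suitable element of $F_{n-1}A_1$ kills the defect and produces the desired lift in $\cM\cC(A_1)$.

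For fullness and faithfulness on morphisms, I would run the same induction on a path object: gauge equivalences between Maurer--Cartan solutions correspond to Maurer--Cartan elements on $A_i \otimes \Omega^*_{[0,1]}$ (or an $A_\infty$-replacement thereof), which is itself nilpotent and filtered, and $f \otimes \id$ is still a filtered quasi-isomorphism. The obstruction theory for existence and uniqueness of lifts of gauges is then controlled by $H^*(F_{n-1}A_1, m_1)$ in one lower degree, and the same cohomological argument applies.

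The main obstacle is setting up the $A_\infty$-gauge formalism carefully enough that the obstruction classes are genuinely well-defined and manifestly natural under $f_*$. The hypothesis is used solely through the fact that $f$ restricts to a quasi-isomorphism on each filtration piece $F_r/F_{r+1}$, and the nilpotency is precisely what guarantees convergence of all the infinite sums appearing in the Maurer--Cartan equation and in the gauge action, so that the inductive lifting procedure terminates after finitely many steps.
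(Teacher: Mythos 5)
The paper itself gives no proof of this lemma: it simply declares it a ``straightforward generalization'' of \cite{ELO2}, Theorem 7.2. Your induction on the length of the filtration, with obstruction classes in the cohomology of the bottom filtration step, is the standard argument behind statements of this kind and is essentially the content of that reference. The key computations are correct: any higher product $m_k$, $k\geq 2$, with at least one entry in $F_{n-1}A_i$ and the rest in $F_1A_i$ lands in $F_{n+k-2}A_i=0$, so $F_{n-1}A_i$ is a square-zero ideal reduced to a complex, the defect $\Phi_0$ of a set-theoretic lift lies in $F_{n-1}A_1$ and is $m_1$-closed by the Bianchi identity, and a correction $\delta\in F_{n-1}A_1^1$ changes the defect by exactly $m_1(\delta)$.

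Two points need attention before this is a complete proof. First, in the essential-surjectivity step your argument as written only produces \emph{some} $\nu\in\cM\cC(A_1)$ reducing to $\bar\nu$; you must still arrange that $f_*(\nu)$ is gauge-equivalent to $\mu$ itself. After replacing $\mu$ by a gauge-equivalent element whose reduction equals $\bar f_*(\bar\nu)$ (lift the gauge from $\bar A_2$ to $A_2$), the difference $f_*(\nu)-\mu$ is an $m_1$-closed element of $F_{n-1}A_2^1$, and one must further correct $\nu$ by a \emph{closed} element of $F_{n-1}A_1^1$ hitting this class; this uses surjectivity of $f_1$ on $H^1(F_{n-1}A_{\cdot})$ and is a genuinely separate step from killing the defect in $H^2$. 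Second, modelling gauge equivalences by Maurer--Cartan elements of $A_i\otimes\Omega^*_{[0,1]}$ implicitly requires $\bbQ\subset k$, whereas this section of the paper works over an arbitrary graded commutative ring; for nilpotent algebras one should instead use the explicit finite gauge-action formulas (compare the formula for $G$ in the proof of Lemma \ref{good_q-is}), for which the same two-step obstruction argument --- existence of a lifted gauge obstructed in $H^1(F_{n-1})$, uniqueness controlled by $H^0(F_{n-1})$ --- applies verbatim and replaces your path-object paragraph.
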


\begin{defi}\label{C^pre-tr}Let $\cC$ be an $A_{\infty}\text{-}$pre-category. Define the $A_{\infty}\text{-}$pre-category
$\cC^{pre-tr}$ of twisted complexes over $\cC$ as follows:

1) Objects of $\cC^{pre-tr}$ are pairs $(S,\alpha),$ where $S$ is
some transversal sequence in $\widetilde{\cC},$ and $\alpha\in
\End_+(S)^1$ is a Maurer-Cartan solution.

2) The sequence $((S_1,\alpha_1),\dots,(S_n,\alpha_n))\in
(\cC^{pre-tr})^n$ is transversal iff the sequence
$(S_1,\dots,S_n)$ is transversal in $\widetilde{\cC}.$

3) For a transversal pair $((S_1,\alpha_1),(S_2,\alpha_2))$ in
$\cC^{pre-tr},$ we put
\begin{equation}\Hom((S_1,\alpha_1),(S_2,\alpha_2))=\bigoplus\limits_{X\in S_1,Y\in S_2}\Hom(X,Y).\end{equation}

4) For a transversal sequence,
$((S_0,\alpha_0),\dots,(S_n,\alpha_n))\in
(\cC^{pre-tr})_{tr}^{n+1},$ and homogeneous morphisms $x_i\in
\Hom((S_{i-1},\alpha_{i-1}),(S_i,\alpha_i))$ we put
\begin{equation}m_n(x_n,\dots,x_1)=\sum\limits_{k_0,\dots,k_n\geq 0}(-1)^{\epsilon}
m_{n+k_0+\dots+k_n}(\alpha_n^{k_n},x_n,\alpha_{n-1}^{k_{n-1}},\dots,x_1,\alpha_0^{k_0}),\end{equation}
where $\epsilon=\sum\limits_{n\geq i>j\geq
0}(\deg(x_i)+k_i)k_j+\sum\limits_{i=0}^n
\frac{k_i(k_i+1)}2+\sum\limits_{i=1}^n ik_i.$\end{defi}

\begin{prop}The $A_{\infty}\text{-}$pre-category $\cC^{pre-tr}$ is
well-defined.
\end{prop}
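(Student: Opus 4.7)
The plan is to verify three properties of the data in Definition~\ref{C^pre-tr}: (i) the a priori infinite sum defining $m_n$ is actually finite, so that $m_n$ is well-defined; (ii) the operations $m_n$ satisfy the $A_\infty$-axioms, so that $\End_+$ of each transversal sequence in $\cC^{pre-tr}$ becomes a non-unital $A_\infty$-algebra; and (iii) the extension property for transversal sequences holds in $\cC^{pre-tr}$.

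Property~(i) follows from nilpotency: each $\alpha_i$ lies in $F_1\End_+(S_i)$, and the filtration on $\End_+(S_i)$ has length bounded by the number of objects in $S_i$. Since the $m_k$'s on $\widetilde{\cC}$ respect the filtration, the term $m_{n+K}(\alpha_n^{k_n}, x_n, \dots, x_1, \alpha_0^{k_0})$ vanishes as soon as any $k_i$ exceeds that length. Property~(ii) is the standard twisted-complex calculation adapted to the pre-categorical setting: expanding $\sum m(m) = 0$ on $\cC^{pre-tr}$ and regrouping, one finds two kinds of contributions, namely those that combine into the Maurer--Cartan equation $\sum_k m_k(\alpha_i, \dots, \alpha_i) = 0$ for some $\alpha_i$ (vanishing by hypothesis) and those that organize themselves as $A_\infty$-relations on $\widetilde{\cC}$ for the concatenated transversal sequence with appropriately inserted $\alpha$'s and $x$'s. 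The sign factor $\epsilon$ in Definition~\ref{C^pre-tr} is precisely what makes the Koszul signs match, paralleling the calculation in \cite{K}.

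Property~(iii) is the substantive step. Given a finite collection $\{T_j\}$ of transversal sequences in $\cC^{pre-tr}$ and an object $(S,\alpha)$ with $S = (X_1[n_1], \dots, X_\ell[n_\ell])$, iterate the extension property of $\cC$: first find $Y_1^\pm \in \cC$ with quasi-isos $Y_1^- \to X_1$ and $X_1 \to Y_1^+$ making all sequences $(Y_1^-, U_j, Y_1^+)$ transversal in $\cC$ (where $U_j$ is the concatenation of $T_j$ with shifts stripped); then apply the property to $X_2$ with the enlarged collection of transversal sequences, obtaining $Y_2^\pm$; continue up to $X_\ell$. Setting $S_\pm := (Y_1^\pm[n_1], \dots, Y_\ell^\pm[n_\ell])$, the sequences $(S_-, T_j, S_+)$ are now transversal in $\widetilde{\cC}$ for every $j$. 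The component quasi-isos assemble into filtered $A_\infty$-quasi-isomorphisms $\End_+(S_\pm) \to \End_+(S)$ of nilpotent $A_\infty$-algebras, and by Lemma~\ref{bijection} we transfer $\alpha$ to Maurer--Cartan elements $\alpha_\pm \in \End_+(S_\pm)$; the resulting morphisms $(S_\pm, \alpha_\pm) \to (S, \alpha)$ (in the appropriate direction) are quasi-isomorphisms in $\cC^{pre-tr}$ by construction.

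The main obstacle is step~(iii): one must simultaneously arrange all transversality requirements by a coherent choice of padding objects and then transfer the Maurer--Cartan element through the resulting padded quasi-isomorphism. Lemma~\ref{bijection}, asserting quasi-iso invariance of Maurer--Cartan groupoids for nilpotent $A_\infty$-algebras, is essential here, since without it there would be no guarantee that $\alpha_\pm$ can actually be realized as a Maurer--Cartan solution on the padded sequences. Properties~(i) and~(ii) are technically routine once the filtration and sign bookkeeping are set up correctly.
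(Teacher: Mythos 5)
Your outline of what must be checked agrees with the paper (which likewise dismisses the finiteness of the sum and the $A_\infty$-relations as routine and concentrates on the extension property), and your padding construction for producing the sequences $S_\pm$ is essentially the right first move. But step (iii) has a genuine gap at its central point: you assert that ``the component quasi-isos assemble into filtered $A_\infty$-quasi-isomorphisms $\End_+(S_\pm)\to\End_+(S)$.'' They do not. A quasi-isomorphism $F_i\colon Y_i^-\to X_i$ is an \emph{element} of $\Hom(Y_i^-,X_i)$, not a map of Hom-complexes, and in an $A_\infty$-pre-category (indeed already in a DG category) a collection of such elements does not induce any $A_\infty$-morphism between the endomorphism algebras $\End_+(Y_1^-,\dots,Y_\ell^-)$ and $\End_+(X_1,\dots,X_\ell)$ --- conjugation is unavailable since the $F_i$ are not strictly invertible. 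Consequently Lemma~\ref{bijection} cannot be applied in the way you propose, and the transfer of the Maurer--Cartan element is not justified.

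The missing idea (this is Lemma~\ref{good_q-is} in the paper) is to form the auxiliary nilpotent $A_\infty$-algebra
$\cB=\cA_1\oplus\cA_2\oplus\bigoplus_{i<j}\Hom(X_i,Y_j)[-1]$, a directed ``cylinder'' whose higher products incorporate the elements $F_i$, and whose two projections $\pi_1\colon\cB\to\cA_1=\End_+(X_\bullet)$ and $\pi_2\colon\cB\to\cA_2=\End_+(Y_\bullet)$ \emph{are} filtered quasi-isomorphisms. One then applies Lemma~\ref{bijection} to this roof: lift the given MC element along one projection to $\tilde\beta\in\cB^1$ and push down along the other. This also repairs the second weak point of your argument, the phrase ``quasi-isomorphisms in $\cC^{pre-tr}$ by construction'': the actual morphism $G$ between the two twisted complexes has components in $\bigoplus\Hom(X_i,Y_j)$, i.e.\ precisely in the third summand of $\cB$, and it must be written down explicitly from the off-diagonal part of $\tilde\beta$, the $F_i$, and a homotopy $h$ witnessing $\pi_2(\tilde\beta)\sim\beta$; it does not fall out of the transfer of MC elements alone. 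Finally, your padding step should also include $S$ itself among the sequences being made transversal, so that $(S_-,S)$ and $(S,S_+)$ are transversal and the algebra $\cB$ (and the Hom-space in which $G$ lives) is defined.
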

\begin{proof} The only non-obvious thing to check is the
extension property. To prove it, we need the following lemma.

\begin{lemma}\label{good_q-is}Let $\cD$ be an $A_{\infty}\text{-}$pre-category, and
$(X_1,\dots,X_n,Y_1,\dots,Y_n)$ a transversal sequence in $\cD.$
Suppose that we are given with quasi-isomorphisms $F_i:X_i\to
Y_i,$ $1\leq i\leq n.$ Then for each Maurer-Cartan solution
$\beta\in \End_+(Y_1,\dots,Y_n)$ (resp. $\alpha\in
\End_+(X_1,\dots,X_n)$) there exists a Maurer-Cartan solution
$\alpha\in \End_+(X_1,\dots,X_n)$ (resp. $\beta\in
\End_+(Y_1,\dots,Y_n)$), together with a quasi-isomorphism
$G:((X_1,\dots,X_n),\alpha)\to ((Y_1,\dots,Y_n),\beta)$ in
$\cD^{pre-tr}.$
\end{lemma}

\begin{proof} Consider the following $A_{\infty}\text{-}$algebras:
$\cA_1=\End_+(X_1,\dots,X_n),$ $\cA_2=\End_+(Y_1,\dots,Y_n)$ and
$\cB$ described as follows. As a $k\text{-}$module,
\begin{equation}\cB=\cA_1\oplus\cA_2\oplus \bigoplus_{1\leq i<j\leq n}\Hom(X_i,Y_j)[-1],\end{equation}

and the higher products are direct sums of that in $\cD,$ and also
\begin{multline}m_{k+l+1}(y_l,\dots,y_1,F_p,x_k,\dots,x_1)\quad\text{ for }y_i\in
\Hom(Y_{j_{i-1}},Y_{j_i}), x_i\in \Hom(X_{q_{i-1}},X_{q_i}),\\
1\leq q_0<\dots<q_k=p=j_0<\dots<j_l\leq n.
\end{multline}

We have obvious projections $\pi_i:\cB\to \cA_i,$ $i=1,2,$ which
are quasi-isomorphisms.

Now suppose that $\beta\in \cA_2^1$ is an MC solution. We will
show how to construct the required $\alpha\in \cA_1^1.$ The
construction in the other direction is analogous.

By Lemma \ref{bijection}, there exists an MC solution
$\tilde{\beta}\in \cB^1$ such that $\pi_2(\tilde{\beta})$ is
homotopic to $\beta.$ The components of $\tilde{\beta},$ together
with $F_i:X_i\to Y_i$ give objects
$E_1=((X_1,\dots,X_n),\pi_1(\tilde{\beta})),
E_2=((Y_1,\dots,Y_n),\pi_2(\tilde{\beta}))\in Ob(\cD^{pre-tr}),$
and a quasi-isomorphism $F:E_1\to E_2$ (this is straightforward).

Further, if $h\in \cA_2^0$ is a morphism
$\pi_2(\tilde{\beta})\to\beta$ in the groupoid $\cM\cC(\cA_2),$
then $G:E_1\to ((Y_1,\dots,Y_n),\beta),$
\begin{equation}G=F+\sum\limits_{k_0,k_1,k_2\geq 0}m_{2+k_0+k_2}^{\cD}(\beta^{k_2},h,\pi_2(\tilde{\beta})^{k_1},F,
\pi_1(\tilde{\beta})^{k_0})\end{equation} is a quasi-isomorphism
in $\cD^{pre-tr}.$ This proves Lemma.
\end{proof}

The above Lemma immediately implies the extension property for
$\cD^{pre-tr}.$
\end{proof}

\begin{prop}\label{invariance} 1) Each $A_{\infty}\text{-}$functor $F:\cD_1\to \cD_2$ induces
in the natural way an $A_{\infty}\text{-}$functor
$F^*:\cD_1^{pre-tr}\to \cD_2^{pre-tr}.$

2) In the case when $F$ is a quasi-equivalence, $F^*$ is also
such.
\end{prop}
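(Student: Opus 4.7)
The plan is to construct $F^*$ by the standard twisted-complex recipe and then verify the two properties of a quasi-equivalence, with part 2b) being by far the most delicate point. For part 1), given $(S,\alpha)\in\cD_1^{pre-tr}$ with $S=(X_1,\dots,X_n)$, set $F^*(S,\alpha)=(F(S),F_{\#}(\alpha))$, where $F(S)=(F(X_1),\dots,F(X_n))$ is transversal in $\widetilde{\cD_2}$ (since $F$ respects transversality and passes to shifts componentwise), and $F_{\#}(\alpha)=\sum_{k\geq 1}(-1)^{?}f_k(\alpha^{\otimes k})$, the sum being finite by nilpotency of $\End_+(F(S))$. This is exactly the image of $\alpha$ under the induced morphism of DG coalgebras $T_+(\End_+(S)[1])\to T_+(\End_+(F(S))[1])$, so $F_{\#}(\alpha)$ automatically satisfies the Maurer--Cartan equation. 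On morphisms, define the Taylor components by the standard formula with $\alpha_i$-insertions,
\begin{equation}
(F^*)_n(x_n,\dots,x_1)=\sum_{k_0,\dots,k_n\geq 0}(-1)^{\epsilon}f_{n+k_0+\cdots+k_n}(\alpha_n^{k_n},x_n,\alpha_{n-1}^{k_{n-1}},\dots,x_1,\alpha_0^{k_0}),
\end{equation}
with the sign $\epsilon$ forced by cobar-duality. The $A_\infty$-functor identities for $F^*$ then reduce, by a direct unwinding of the coalgebra morphism identity, to the $A_\infty$-functor identities for $F$ combined with the Maurer--Cartan equations on the $\alpha_i$'s.

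For part 2a), the map $(F^*)_1$ on a transversal pair decomposes as a map of direct sums $\bigoplus_{X\in S_1,Y\in S_2}\Hom(X,Y)\to\bigoplus\Hom(F(X),F(Y))$, carrying a natural finite decreasing filtration by the ``distance from the corner'' of the matrix of homs. The differential induced by $\alpha_1,\alpha_2$ strictly increases filtration degree, and $(F^*)_1$ is filtration-preserving with associated-graded equal to the direct sum of the component maps $f_1$, each of which is a quasi-isomorphism by hypothesis. A standard spectral sequence argument (or a short induction on the filtration) then gives that $(F^*)_1$ is a quasi-isomorphism on the total complex.

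Part 2b), essential surjectivity on quasi-isomorphism classes, is the main obstacle and is handled in three steps. First, given $(T,\beta)\in\cD_2^{pre-tr}$ with $T=(Y_1,\dots,Y_n)$, choose $X_i\in Ob(\cD_1)$ with $F(X_i)$ quasi-isomorphic to $Y_i$, using essential surjectivity of $F$; then apply the extension property in $\cD_1$ inductively to replace each $X_i$ by a quasi-isomorphic object so that $S=(X_1,\dots,X_n)$ is a transversal sequence in $\widetilde{\cD_1}$. Consequently $(F(X_1),\dots,F(X_n))$ is transversal in $\widetilde{\cD_2}$, and each $F(X_i)$ is quasi-isomorphic to $Y_i$, so Lemma \ref{good_q-is} produces a Maurer--Cartan solution $\alpha'\in\End_+(F(S))^1$ together with a quasi-isomorphism $(F(S),\alpha')\to(T,\beta)$ in $\cD_2^{pre-tr}$.

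The second key step is to lift $\alpha'$ through $F$. The restriction of $F$ to $\End_+(S)$ is an $A_\infty$-morphism of nilpotent $A_\infty$-algebras $\End_+(S)\to\End_+(F(S))$, respecting the filtration by row-distance, and its associated graded is the direct sum of the $f_1$'s, hence a quasi-isomorphism of complexes. By Lemma \ref{bijection} the induced functor on Maurer--Cartan groupoids is an equivalence, so there exists $\alpha\in\End_+(S)^1$ with $F_{\#}(\alpha)$ equivalent (in $\cM\cC$) to $\alpha'$. Composing the resulting quasi-isomorphism $F^*(S,\alpha)\to(F(S),\alpha')$ with the one from the previous step yields a quasi-isomorphism $F^*(S,\alpha)\to(T,\beta)$, finishing essential surjectivity. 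The hard part is precisely the transversality lift in $\cD_1$ coupled with the Maurer--Cartan transport; both are controlled exactly by the hypotheses and by Lemmas \ref{good_q-is} and \ref{bijection}.
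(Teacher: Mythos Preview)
Your formulas for $F^*$ and the filtration argument for 2a) match the paper's treatment, but part 1) has a real gap: you only verify the formal $A_\infty$-functor identities, whereas the definition of an $A_\infty$-functor between $A_\infty$-\emph{pre}-categories (see \S2.3) also requires that quasi-isomorphisms be taken to quasi-isomorphisms. This is precisely the content of the paper's proof of 1), and it is not automatic: a quasi-isomorphism $g$ in $\cD_1^{pre-tr}$ is characterised by how $m_2(g,\cdot)$ and $m_2(\cdot,g)$ act on $\Hom$'s for \emph{all} transversal test objects, and after applying $F^*$ the relevant test objects in $\cD_2^{pre-tr}$ need not lie in the image of $F^*$, so one cannot simply pull the condition back. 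The paper handles this by isolating ``good'' quasi-isomorphisms (upper-triangular with each diagonal component a quasi-isomorphism in $\cD_1$), for which preservation under $F^*$ is clear via the same filtration argument you use in 2a); it then invokes Lemma~\ref{good_q-is} to show that an arbitrary quasi-isomorphism $f_1:E_1\to E_2$ can be pre- and post-composed (up to homotopy) with good quasi-isomorphisms, and deduces that $(F^*)_1(f_1)$ is a quasi-isomorphism from this sandwich.

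There is a second, smaller gap in 2b): invoking Lemma~\ref{good_q-is} requires the concatenated sequence $(F(X_1),\dots,F(X_n),Y_1,\dots,Y_n)$ to be transversal in $\cD_2$ \emph{and} actual quasi-isomorphisms $F(X_i)\to Y_i$, not merely that $F(X_i)$ and $Y_i$ lie in the same quasi-isomorphism class. Your construction of $S$ uses only the extension property in $\cD_1$, which gives no control over transversality in $\cD_2$ relative to the fixed sequence $T$. The overall strategy---combine Lemma~\ref{good_q-is} with Lemma~\ref{bijection}---is exactly the paper's, but as written the hypotheses of Lemma~\ref{good_q-is} are not in place; one must also use the extension property in $\cD_2$ to position the $F(X_i)$'s transversally to the left of $T$ before the Maurer--Cartan transport and the lift through $F$.
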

\begin{proof}1) The $A_{\infty}\text{-}$functor $F^*$ is given by the same formulas as in \cite{ELO2}, Section 7. We should
prove that it preserves quasi-isomorphisms. We note that this is
evident for the following class of "good"
quasi-isomorphisms.

We call a quasi-isomorphism $f:((X_1,\dots,X_n),\alpha)\to
((Y_1,\dots,Y_m),\beta)$ good if $n=m,$ the components
$f_{ij}:X_i\to Y_j$ vanish for $i>j$ and the components
$f_{ii}:X_i\to Y_i$ are quasi-isomorphisms.

Clearly, good quasi-isomorphisms are preserved by $F^*.$ It
follows from Lemma \ref{good_q-is} that for each quasi-isomorphism
$f_1:E_1\to E_2$ in $\cD_1^{pre-tr},$ there exist
quasi-isomorphisms $f_0:E_0\to E_1,$ $f_2:E_2\to E_3$ such that
the sequence $(E_0,E_1,E_2,E_3)$ is transversal and
$m_2(f_1,f_0),$ $m_2(f_2,f_1)$ are homotopic to good quasi-isomorphisms. This
proves part 1) of Proposition.

2) If $F$ is a quasi-isomorphism, then all morphisms
$f_1:\Hom_{\cD_1^{pre-tr}}(E_1,E_2)\to
\Hom_{\cD_2}(F(E_1),F(E_2))$ induce quasi-isomorphisms on the
subquotients with respect to the natural filtrations

\begin{equation}F_r\Hom(((X_1,\dots,X_n),\alpha),((Y_1,\dots,Y_m),\beta))=\bigoplus\limits_{j-i\geq r} \Hom(X_i,Y_j).\end{equation}

Essential surjectivity is implied by Lemma \ref{good_q-is},
together with Lemma \ref{bijection}.
\end{proof}

For the ordinary $A_{\infty}\text{-}$categories, our construction
gives standard $A_{\infty}\text{-}$categories of twisted complexes
introduced in \cite{BK} for DG categories and generalized in
\cite{K} to $A_{\infty}\text{-}$categories.

Now suppose that $k$ is again a field. By Proposition
\ref{invariance} 2), we have that passing from an essentially
small $A_{\infty}\text{-}$pre-category to quasi-equivalent
$A_{\infty}\text{-}$category commutes (up to quasi-equivalence)
with taking of twisted complexes.

\end{document}